\newcommand\mytitle{$L_1$-Estimates for Eigenfunctions\\
     of the Dirichlet Laplacian}
\newcommand\lhead{M.~van den Berg, R.~Hempel,
and J.~Voigt}
\newcommand\rhead{ $L_1$-estimates for eigenfunctions}
\numberwithin{equation}{section}
\newtheorem{theorem}{Theorem}[section]
\newtheorem{corollary}[theorem]{Corollary}
\newtheorem{proposition}[theorem]{Proposition}
\newtheorem{lemma}[theorem]{Lemma}
\theoremstyle{definition}
\newtheorem*{definition}{Definition}
\newtheorem{remark}[theorem]{Remark}
\newtheorem*{remark*}{Remark}
\newtheorem{examples}[theorem]{Examples}
 \mathchardef\ordinarycolon\mathcode`\:
\def\N{{\mathbb N}}
\def\Z{{\mathbb Z}}
\def\R{{\mathbb R}}
\def\C{{\mathbb C}}
\def\rho{\varrho}
\def\3{\ss}
\def\theta{\vartheta}
\def\mid{\>;\>}
\def\mid{\,;\ }
\def\norm#1{\|#1\|}
\def\la{\lambda}
\def\eps{\varepsilon}
\def\epsilon{\varepsilon}
\def\Cci#1{C_c^\infty(#1)}
\def\phi{\varphi}
\def\sigmaess{\sigma_{\rm ess}}
\def\loc{{\rm loc}}
\def\supp{\hbox{\rm supp}\,}
\def\supp{\mathop{\rm spt}\nolimits}
\def\spt{\mathop{\rm spt}\nolimits}
\def\dist{\hbox{\rm dist}}
\def\diam{\hbox{\rm diam}}
\def\scapro#1#2{\left<#1,#2 \right>}
\def\iff{\quad \Longleftrightarrow \quad}
\def\ee{{\rm e}}
\def\di{{\,\rm d}}
\def\EE#1#2{E_#2(#1)}
\def\JJ{J}
\def\NN{{\mathcal N}}
\def\NN{N}
\def\OO{{\mathcal O}}
\newcommand\CC{{\mathcal C}}
\def\scapro#1#2{\left<#1\mskip2mu{,}\mskip2mu#2\right>}
\def\Hnought{H_0}
\newcommand\sse\subseteq
\renewcommand\emptyset\varnothing
\newcommand\indic{{\bf 1}}
\def\dparam{\Lambda}
\newcommand\class{set}
\newcommand\classes{sets}
\newcommand\trace{\mathop{\rm tr}\nolimits}
\newcommand\md{m_0}         
\newcommand\alphad{\alpha}  
\newcommand\cnought{c}      
\newcommand\vol{\mathop{\rm vol}\nolimits}
\newcommand\mC{{\hat C}}
\renewcommand\le{\leqslant}
\renewcommand\leq{\leqslant}
\renewcommand\ge{\geqslant}
\begin{document}
\title{\mytitle}

\author{Michiel van den Berg\footnote{School of Mathematics, University of Bristol,
University Walk, Bristol BS8 1TW, United Kingdom.},\
 Rainer Hempel\footnote{TU Braunschweig, Institute for Computational Mathematics,
 Am Fallersleber Tore 1, D-38100 Braunschweig, Germany.},\
 and J\"urgen Voigt\footnote{TU Dresden, Fachrichtung Mathematik,
D-01062 Dresden, Germany.}}

\date{}

\maketitle

 \begin{abstract}
 For $d \in \N$ and  $\Omega \ne \emptyset$ an open set in
 $\R^d$, we consider the eigenfunctions $\Phi$ of the Dirichlet
 Laplacian $-\Delta_\Omega$ of $\Omega$. If $\Phi$ is associated with an
 eigenvalue below the essential spectrum of $-\Delta_\Omega$ we
 provide estimates for the $L_1$-norm of $\Phi$
 in terms of its $L_2$-norm and spectral data.
 These $L_1$-estimates are then used in the comparison of the
 heat content of $\Omega$ at time $t>0$ and
 the heat trace at times $t' > 0$, where a two-sided estimate is established.
 We furthermore show that all eigenfunctions of $-\Delta_\Omega$ which are associated with a
 discrete eigenvalue of $H_\Omega$, belong to $L_1(\Omega)$.

\end{abstract}

\vskip .5truecm \noindent \ \ \ \ \ \ \ \  { Mathematics Subject
Classification (2000)}: 35P99, 35K05, 35K20, 47A10.
\vskip .5truecm \noindent \textbf{Keywords}: Dirichlet Laplacian,
eigenfunctions,
  $L_1$-estimates, heat trace, heat content.

\section*{Introduction}

We study the eigenfunctions $\Phi$ of the Dirichlet
Laplacian $H_\Omega$ on an open set $\Omega \sse \R^d$,
associated with a (discrete) eigenvalue $\la \in \R$. Our main interest is
to provide bounds on $\norm{\Phi}_1$, the norm of $\Phi$ in $L_1(\Omega)$, in terms of
the $L_2$-norm of $\Phi$ and spectral data. In many cases this is an improvement
over the elementary estimate $\norm{u}_1^2 \le \vol(\Omega) \norm{u}_2^2$,
valid for $\Omega$ of finite volume (and all functions $u \in L_2(\Omega)$, not
just eigenfunctions).
 Roughly speaking, we advocate here to replace the factor $\vol(\Omega)$ with
 $\la_1^{-d/2} N_{2\la_k}(H_\Omega)$, where $\la_1$ denotes the lowest eigenvalue of
 $H_\Omega$ and $N_t$ counts the (repeated) eigenvalues of
 $H_\Omega$ less than or equal to $t$. Our actual estimates are more
 complicated than that, and they only hold for eigenvalues below
 the essential spectrum. That spectral information can be used instead of
 volume doesn't come as a complete surprise: indeed, the uncertainty
 principle has found various expressions in spectral terms as in Weyl's Law
 and other well-known results that connect volumes in phase space
 with the counting of eigenvalues \cite{fef}. Estimates for the $L_1$-norm of
 eigenfunctions as presented here have been a desideratum for several
 decades now because they yield bounds on the heat content of $\Omega$
 in terms of the heat trace; see also \cite[p.\,2065]{Ob}.

Throughout this paper, $H_\Omega$ will be defined as the Friedrichs extension of
 $-\Delta$ on $C_c^\infty(\Omega)$;  $H_\Omega$ is self-adjoint and
non-negative.  More precisely, the form domain of $H_\Omega$ is given
by the Sobolev space $\Hnought^1(\Omega)$, and $H_\Omega$ satisfies
\begin{align}
    \scapro{H_\Omega u}{v} = \int_\Omega \nabla u \cdot \nabla v \di
    x,
\end{align}
for all $u \in D(H_\Omega)$ and all $v \in \Hnought^1(\Omega)$.
The eigenfunctions $\Phi$ of $H_\Omega$, associated  with an eigenvalue $\la$,
are
smooth and bounded and obey a well-known estimate
\begin{align}
    \norm{\Phi}_\infty^2 \le C \la^{d/2} \norm{\Phi}_2^2,
    \label{Linfty-bound}
\end{align}
with a constant $C$ depending on $d$ only. Theorem~\ref{2.12}
below gives an explicit constant. This estimate is a direct
consequence of the domain monotonicity of the heat kernel
\cite{dav}. Interpolation then yields bounds on $\norm{\Phi}_q$
for any $q \in (2,\infty)$. An immediate consequence of
\eqref{Linfty-bound} is a lower bound for $\norm{\Phi}_1$ of the
form
 \begin{align}
    \norm{{\Phi}}_1^2 \ge   C \la^{-d/2} \norm{{\Phi}}_2^2,
    \label{lowerbound}
\end{align}
where $C$ is a strictly positive constant depending on $d$ only.
Note that the estimates \eqref{Linfty-bound} and
\eqref{lowerbound} hold for \emph{all} eigenvalues.

We now complement the $L_\infty$-estimate \eqref{Linfty-bound} by upper bounds on
$\norm{\Phi}_1$, where we have to make the stronger assumption
that $\Phi$ is an eigenfunction associated with a discrete eigenvalue
$\la_k$ located below the essential spectrum of $H_\Omega$.
Here the $\la_k$ are numbered in increasing order and repeated according to their
respective multiplicity. One of our basic estimates reads as follows:

 \vskip1em

\begin{theorem} \label{Theorem 0.1}
 For any $d \in \N$ there exists a
 constant $C$ (depending on $d$ only) with
the following property: If $\Omega \ne \emptyset$ is an open subset of $\R^d$ with
 $\sigmaess(H_\Omega) = \emptyset$, we have
\begin{align}
    \norm{{\Phi}}_1^2 \le   C \lambda_1^{-d/2}
\left( \Bigl(\frac{\la_k}{\la_1}\Bigr)^d
     (\log\NN_{2\la_k}(H_\Omega))^d \NN_{2\la_k}(H_\Omega)
     + \Bigl(\frac{\la_k}{\la_1}\Bigr)^{4d -3} \right)  \norm{{\Phi}}_2^2,
   \label{upperbound}
\end{align}
 for all eigenfunctions ${\Phi}$ of $H_\Omega$ associated with the
eigenvalue $\la_k$.
 \end{theorem}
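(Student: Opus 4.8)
The plan is to combine the pointwise $L_\infty$-bound \eqref{Linfty-bound} with a decay estimate for $\Phi$ away from the ``bulk'' of $\Omega$, so that $\norm{\Phi}_1$ can be controlled by integrating over a region of finite, spectrally-controlled volume. Write $\Phi = P_{\{\la_k\}}(H_\Omega)\Phi = \ee^{t\la_k}\ee^{-tH_\Omega}\Phi$ for a time $t>0$ to be chosen. Splitting the heat semigroup via its kernel $p_\Omega(t,x,y)$ and using $p_\Omega \le p_{\R^d}$ (domain monotonicity), one gets for each $x$
\begin{align}
 |\Phi(x)| \le \ee^{t\la_k}\int_\Omega p_{\R^d}(t,x,y)|\Phi(y)|\di y
  \le \ee^{t\la_k}\bigl(\norm{p_{\R^d}(t,x,\cdot)}_2\norm{\Phi}_2\bigr),
\end{align}
which is how \eqref{Linfty-bound} arises with $t\sim 1/\la_k$; but the point is that for $x$ far from the support of most of $\Phi$'s mass the Gaussian factor is tiny. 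More precisely, I would first show that the sublevel set $\Omega_\rho := \{x\in\Omega : \dist(x,\partial\Omega) > \rho\}$ carries almost all of the $L_1$-mass: on $\Omega\setminus\Omega_\rho$ one has a boundary-layer bound, and for $x$ with small $\dist(x,\partial\Omega)$ the Dirichlet heat kernel satisfies $p_\Omega(t,x,x)\lesssim t^{-d/2}\ee^{-c\,\dist(x,\partial\Omega)^2/t}$, so that the contribution to $\norm{\Phi}_1$ from the thin layer is exponentially small in $\rho^2/t$.

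The core estimate is a bound on $\vol(\Omega_\rho)$ for a suitable $\rho$. Here I would invoke the eigenvalue counting function: by domain monotonicity of Dirichlet eigenvalues, any cube $Q\sse\Omega$ of side $s$ contributes an eigenvalue roughly $d\pi^2/s^2$, so a packing of $\Omega_\rho$ by disjoint cubes of side $\sim\rho$ shows $\vol(\Omega_\rho)\lesssim \rho^d\, N_{c/\rho^2}(H_\Omega)$. Choosing $\rho^2 \sim (\log N_{2\la_k}(H_\Omega))/\la_k$ makes $c/\rho^2 \sim \la_k/\log N_{2\la_k} \le 2\la_k$ (for $N_{2\la_k}\ge\ee^{c/2}$, the small cases being handled separately via \eqref{lowerbound}-type trivialities and the term $(\la_k/\la_1)^{4d-3}$), and simultaneously makes the exponential tail $\ee^{-c\rho^2\la_k}\sim N_{2\la_k}^{-c'}$ summable against the $L_\infty$-bound on the layer. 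Then
\begin{align}
 \norm{\Phi}_1 \le \int_{\Omega_\rho}|\Phi| + \int_{\Omega\setminus\Omega_\rho}|\Phi|
 \lesssim \vol(\Omega_\rho)\norm{\Phi}_\infty + (\text{exp.\ small}),
\end{align}
and inserting $\norm{\Phi}_\infty \lesssim \la_k^{d/4}\norm{\Phi}_2$, $\vol(\Omega_\rho)\lesssim \rho^d N_{2\la_k}\sim \la_k^{-d/2}(\log N_{2\la_k})^{d/2}N_{2\la_k}$, and squaring, produces the first term of \eqref{upperbound} after the homogeneity bookkeeping that converts a $\la_k$ power into $\la_1^{-d/2}(\la_k/\la_1)^{d}$.

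The second term $(\la_k/\la_1)^{4d-3}\la_1^{-d/2}$ is the ``error'' term absorbing all the regimes where the above is not efficient: small $N_{2\la_k}$, or $x$ in the boundary layer where the naive Gaussian tail must be traded against a cruder bound using that the heat content of $\Omega$ itself is finite (again by a cube-packing/counting argument bounding $\vol(\Omega)$ in the relevant range). I expect the main obstacle to be making the boundary-layer estimate fully quantitative without any regularity assumption on $\partial\Omega$: since $\Omega$ is merely open, one cannot use Gaussian upper bounds for $p_\Omega$ in the usual form, and instead one must argue through the semigroup domination $0\le p_\Omega \le p_{\R^d}$ together with a capacity/Dirichlet-form comparison to capture the vanishing of $\Phi$ at the boundary — this is presumably where the somewhat lossy exponent $4d-3$ and the exact shape of the estimate in Theorem~\ref{2.12} enter. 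I would isolate this as a separate lemma (an $L_\infty$- and $L_1$-bound on $\Phi\cdot\indic_{\Omega\setminus\Omega_\rho}$ in terms of $\rho$, $\la_k$, and $\norm{\Phi}_2$) and then the proof of Theorem~\ref{Theorem 0.1} becomes the optimization over $\rho$ sketched above.
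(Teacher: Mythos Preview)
Your decomposition $\Omega = \Omega_\rho \cup (\Omega\setminus\Omega_\rho)$ based on distance to $\partial\Omega$ is the wrong one, and this is a genuine gap rather than a technicality about boundary regularity. The set $\Omega\setminus\Omega_\rho$ can have infinite volume even when $H_\Omega$ has compact resolvent: take $\Omega$ to be a unit ball joined to an infinite tube of width $2\eps$ with $\eps<\rho$. Then $\Omega_\rho$ sits inside the ball, while the entire tube lies in the ``boundary layer''. On the tube, $\dist(x,\partial\Omega)\le\eps$ uniformly, so your heat-kernel bound $p_\Omega(t,x,x)\lesssim t^{-d/2}\ee^{-c\,\dist(x,\partial\Omega)^2/t}$ gives a constant along the whole tube and cannot be integrated. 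The eigenfunction \emph{does} decay exponentially along the tube, but the mechanism is that the local Dirichlet eigenvalue in the tube ($\approx\pi^2/\eps^2$) exceeds $\la_k$; this has nothing to do with distance to $\partial\Omega$. Your argument that ``for $x$ far from the support of most of $\Phi$'s mass the Gaussian factor is tiny'' is correct intuition, but you then silently identify ``where the mass lives'' with $\Omega_\rho$, which is circular.

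What the paper does instead is a \emph{spectral} decomposition: cover $\R^d$ by cubes $Q_{n,j}$ of side $2n$, let $F_n$ be the union of those cubes for which $\la_1(H_{\Omega\cap Q_{n,j}})<t$ (with $t=2\la_k$ in the application), and show via a counting argument that there are at most $3^d N_t(H_\Omega)$ such cubes, so $\vol(F_n)\lesssim n^d N_t$. On $G_n:=\Omega\setminus\overline{F_n}$ the IMS localization formula gives $\inf\sigma(H_{G_n})\ge s>r\ge\la_k$, and then an Agmon/boosting argument (conjugation by $\ee^{\alpha f}$ for linear $f$) yields $\norm{\check\chi_j(H_{G_n}-\la_k)^{-1}\check\chi_\ell}\lesssim\ee^{-\alpha|j-\ell|}$, from which one obtains exponential decay of $\Phi$ in $L_2$ (hence $L_1$) as a function of distance to $F_n$, not to $\partial\Omega$. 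The $L_1$-norm over $\Omega\cap\tilde{\tilde F}_n$ is handled by Cauchy--Schwarz and the volume bound, and the tail over $\Omega\setminus\tilde{\tilde F}_n$ is a convergent geometric sum; optimizing $n\sim\alpha^{-1}\log N_t$ balances the two and produces the stated bound. Your cube-packing bound $\vol(\Omega_\rho)\lesssim\rho^d N_{c/\rho^2}$ is correct and is in spirit the same as the paper's Lemma~\ref{3.1}, but you need to replace $\Omega_\rho$ by the spectrally defined set $F_n$ and the boundary-layer heat-kernel decay by Agmon decay away from $F_n$.
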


\vskip1em

   A slightly more general version is given in Corollary \ref{2.5}.
The estimate \eqref{upperbound} contains three factors which we
believe are
 essential: there is the factor $\la_1^{-d/2}$  which is due to scaling, as can
be seen from the ground state eigenfunctions of a ball of radius $r > 0$.
The presence of $\NN_{2\la_k}(H_\Omega)$ will become more clear later on.
Factors containing $\la_k/\la_1$ deal with the behavior of the
estimate for large eigenvalues as compared to small eigenvalues.

 The discrepancy between the lower bound  \eqref{lowerbound} and the
 upper bound \eqref{upperbound} is, at least in part, due to the fact
 that the estimate \eqref{lowerbound} seems
 to be far off in situations with large clouds of eigenvalues close to
 $\la_k$, as can be seen in simple examples like our Example \ref{2.14}(3).
 However, we do not expect the estimate \eqref{upperbound} to be optimal in 
any
 respect.

  There are similar, but somewhat more complicated estimates for the case
where $\sigmaess(H_\Omega) \ne \emptyset$ and
 $ \la_k < \inf\sigmaess(H_\Omega)$; cf.~Corollary \ref{2.6}.
Along these lines it would also be possible to give estimates for
$\norm{{\Phi}}_1$
 that involve the gap length $\la_{k+1} - \la_k$, provided $\la_{k+1} > \la_k$, for
eigenfunctions $\Phi$ of $H_\Omega$ associated with the eigenvalue
$\la_k$. We refer to the comment after Corollary
\ref{2.5}. We emphasize that our bounds do {\it not} depend on the
volume of $\Omega$.

We furthermore show that the eigenfunctions $\Phi$ of $H_\Omega$
belong to $L_1(\Omega)$ if they are associated with a discrete
eigenvalue $\la$, and so  $\Phi \in \bigcap_{1 \le p \le
\infty}L_p(\Omega)$. There is no reason to expect a similar result
for eigenfunctions associated with an eigenvalue which belongs to
the essential spectrum; see e.g.\ Example~\ref{2.14}(2) below.

 Let us briefly indicate why estimates as in Theorem~\ref{Theorem 0.1}
 are possible.  They are essentially based on three facts which we
 describe in terms of a covering of $\R^d$ by cubes $Q_{n,j}$ (for $n \in \N$ and
 $j \in \Z^d$), where each $Q_{n,j}$ has edge length $2n$ and is centered at $nj$.
 Let us consider $\Omega$, $\la_k$ and $\Phi$ as in Theorem~\ref{Theorem 0.1}.

 (1) We first observe that there can only be a finite number
 of cubes $Q_{n,j}$ such that the Dirichlet Laplacian of $\Omega \cap Q_{n,j}$
 has eigenvalues below  $2 \la_k$. In fact, the number of these cubes can
 be estimated in terms of $N_{2\la_k}$. We let $F_n$ denote the union
 of these cubes. The contribution to the $L_1$-norm of $\Phi$ coming from
 $F_n$ (or a slightly larger set) can now be estimated in terms of $\vol(F_n)$ and, thus, in terms
 of $n$ and $N_{2\la_k}$.

 (2) Letting $G_n := \Omega \setminus F_n$, we use a partition of unity
 (subordinate to the covering by the cubes $Q_{n,j}$) and the IMS-localization
 formula to show that the Dirichlet Laplacian of $G_n$ has no eigenvalues
  below $3\la_k/2$.

 (3) The third fact concerns the decay of the eigenfunctions
 $\Phi$ associated with the eigenvalue $\la_k$ as we move away from $F_n$.
 Here we use a rather precise, quantitative version of exponential decay which takes into
 account the distance from the set $F_n$. A standard decay estimate holding
 just outside some large ball containing $F_n$ would clearly be insufficient
 for our purposes. Since exponential decay takes place on a larger
 scale we also have to introduce the sets $\tilde F_n$ and $
{\tilde{\tilde{F}}}_n$ that are somewhat larger than $F_n$.

 The estimate given in Theorem~\ref{Theorem 0.1} can be applied in the
comparison of the \emph{heat content}
 $Q_\Omega$ of an open set $\Omega \subseteq \R^d$,
\[
    Q_\Omega(t) := \int_\Omega\int_\Omega p_\Omega(x,y;t)\di y\di x
\]
at time $t > 0$, where $p_\Omega\colon \Omega\times\Omega\times(0,\infty)\to
[0,\infty)$ is the Dirichlet heat kernel for $\Omega$,
and the \emph{heat trace} $Z_\Omega$,
\[
    Z_\Omega(t) :=  \sum_{k=1}^\infty \ee^{-\la_k t}
\]
at time $t > 0$, where it is assumed that $H_\Omega$ has compact resolvent and
that $(\la_k)$ is the sequence of all eigenvalues of $H_\Omega$.
We shall show in Section~\ref{content-trace} that
$Q_\Omega(t) < \infty$ for all
 $t > 0$ is equivalent to $Z_\Omega(t) < \infty$ for all $t > 0$, and that
 there is a two-sided estimate. Note that our
 upper bound for $Q_\Omega(t)$
involves $Z_\Omega(t/2)$ and $Z_\Omega(t/6)^3$. The interest in
a bound on $Q_\Omega$ in terms of $Z_\Omega$ lies in the fact that
the quantity $Z_\Omega$ is much simpler (and also simpler to
compute) than $Q_\Omega$ because only information from the Hilbert
space $L_2(\Omega)$ is needed.

 Similar estimates on the $L_1$-norm of eigenfunctions of Schr\"odinger
 operators will be the subject of a forthcoming paper.
  It would also be of great interest and importance
 to generalize our results to the case of domains on Riemannian manifolds
 with sub-exponential growth at infinity \cite{S}.
 The case of hyperbolic manifolds poses different
 challenges as can be seen from \cite{DST}.

Our paper is organized as follows. In Section 1 we first consider the
case where $\la_1(H_\Omega) = 1$ and present a basic estimate in its
most general form (viz.~Proposition 1.1); the proof of Proposition 1.1
is deferred to Section 2. We then derive several estimates
from Proposition 1.1 by scaling and a judicious choice of
parameters in Proposition 1.1. In Section 2 we construct IMS-partitions of
unity, depending on a parameter $n$, and prove Proposition 1.1;
here we rely on an exponential decay estimate stated in Lemma 2.3.
Section 3 is devoted to a proof of Lemma 2.3.

In Section 4 we combine some results from the theory of the
Laplacian in $L_p(\Omega)$, defined as the generator of the heat
semigroup acting in $L_p(\Omega)$, to show that the Riesz
projection, associated with the eigenspace of a discrete
eigenvalue, is independent of $p$, for $1 \le p < \infty$. It is
then easy to conclude that the range of this projection must be
contained in $L_1(\Omega)$. This part of the paper has been
motivated by the work \cite{hem-voi, he-vo-87} of two of the
authors on the $L_p$-spectrum of Schr\"odinger operators.

In Section 5, finally, we discuss a two-sided estimate for the
heat content and the heat trace, using the kernel $ p_\Omega(x,y
;t)$ of the heat semigroup. We also give a proof of the lower
bound of Theorem \ref{2.12}.
\bigskip

{\bf Disclaimer.} In much of this text we let $C$ denote a generic
non-negative constant the value of which may change from line to line.
\bigskip

{\bf Acknowledgement.} The authors are indebted to Hendrik Vogt for useful 
discussions.

\vskip1em
\section{Estimates for the $L_1$-norm of eigenfunctions}
\label{estimates}

Let $d \in \N$. For an open set $\emptyset \ne \Omega \sse \R^d$ we
let $H_\Omega$ denote the (self-adjoint and non-negative) Dirichlet Laplacian
of $\Omega$, i.e., $H_\Omega$ is the unique self-adjoint operator with
form domain given by the Sobolev space $\Hnought^1(\Omega)$, and satisfying
\begin{align*}
   \scapro{H_\Omega u}{v} =
    \scapro{\nabla u}{\nabla v}
  \qquad (u \in D(H_\Omega),\ v \in \Hnought^1(\Omega)),
\end{align*}
where $D(H_\Omega) \sse \Hnought^1(\Omega)$ denotes the domain of
$H_\Omega$.
By construction,  $C_c^\infty(\Omega)$ is a form core of $H_\Omega$.
Furthermore, $D(H_\Omega) \sse H^2_{\loc}(\Omega)$
 and $H_\Omega u = -\Delta u \in L_2(\Omega)$ for any $u \in D(H_\Omega)$.
In general, $D(H_\Omega)$ need not be contained in the Sobolev space
$H^2(\Omega)$.

\vskip1em

\begin{definition} For $\dparam > 0$, we let ${\cal O}_\dparam$ denote the
{\class}
of
all
open sets $\Omega \sse \R^d$ that enjoy the property
\begin{align*}
  \dparam = \inf\sigma(H_\Omega) < \inf\sigmaess(H_\Omega).
\end{align*}
\vskip1em
The sets $\Omega \in {\cal O}_\dparam$ may be unbounded, and they may have
infinite volume. Also, they may consists of
countably many components.
No regularity of the boundary $\partial\Omega$ will be required.
\end{definition}

The spectrum of $H_\Omega$ below $\Sigma_\Omega := \inf\sigmaess(H_\Omega)$ is
purely discrete.
It consists of a countable set of eigenvalues $\lambda_k(H_\Omega)$
(with $1 \le k \le K$ for some $K \in \N$, or for $k \in \N$), which we assume
to be numbered such that
\begin{align*}
   \dparam = \la_1(H_\Omega) \le \la_2(H_\Omega) \le \ldots \le
   \la_k(H_\Omega) \le  \la_{k+1}(H_\Omega) <  \Sigma_\Omega,
\end{align*}
 where each eigenvalue is repeated according to its multiplicity.
 If there is an infinite number of eigenvalues, we
 have $\la_k \to
 \Sigma_\Omega$ if $\sigmaess(H_\Omega) \ne \emptyset$, and
 $\la_k \to \infty$ if $H_\Omega$ has compact resolvent.
 If $\Omega$ is connected the ground state eigenfunction is unique (up to scalar multiples)
 and $\la_1 < \la_2$.

\smallskip
 Our results pertain in particular to the case where $H_\Omega$ has compact
 resolvent. If $\Omega$ has finite volume then $H_\Omega$ has compact resolvent.
 Furthermore it is well-known that even if $\Omega$
has infinite volume $H_\Omega$ may have compact resolvent.
Necessary and sufficient criteria for $H_\Omega$ to have compact
resolvent in terms of $\Omega$ have been obtained in Section
15.7.3 of \cite{VGM1} and in \cite{VGM2}. For concrete estimates
for the counting function we refer to \cite{vdB} and the
references therein. We also refer to the elementary
Example~\ref{2.14}(1) below.

Note that, for $\dparam>0$, the {\class} ${\cal O}_\dparam$ can be
obtained from the {\class} ${\cal O}_1$ by scaling;
\begin{align*}
   \Omega \in {\cal O}_\dparam \iff \sqrt{\dparam}\,\Omega \in {\cal O}_1,
\end{align*}
for all $\dparam > 0$.  Therefore, we will first derive an estimate
on $\norm{{\Phi}}_1$ for $\Omega$ in the {\class} ${\cal O}_1$.
The general result will then easily follow by scaling.
 In our basic estimate for the {\class} ${\cal O}_1$ we will work with
parameters $r, t$ satisfying
\begin{align*}
    1 \le r < t < \Sigma_\Omega;
\end{align*}
as usual, we let ${\Sigma_\Omega =}\inf\sigmaess(H_\Omega) = \infty$ if
$\sigmaess(H_\Omega) = \emptyset$.

Below, we will derive estimates for all eigenfunctions ${\Phi}$ of $H_\Omega$
associated with eigenvalues $\la_k \in [1,r]$. These estimates will depend
on the number of eigenvalues of $H_\Omega$ in the
interval $[1,t]$, counting multiplicities. Here we use the following
definition: for a self-adjoint operator $T$ and
$t \in \R$ we write
\begin{align*}
   \NN_t(T) := \trace \EE{(-\infty, t]}T,
 \end{align*}
where ${\rm tr}$ denotes the trace, and $\EE IT$ is the spectral
projection of $T$ associated with the interval $I \sse \R$. In
particular, if $T$ is semi-bounded from below and if $t <
\inf\sigmaess(T)$, then $\NN_t(T)$ denotes the number of
eigenvalues of $T$ less than or equal to $t$, counting
multiplicities. If, in our enumeration of eigenvalues, $\la_{k+1}
> \la_k$ for some $k$, we have $N_{\la_k}(T) = k$.

In order to express a certain quantity occurring in the estimate
derived below, we fix (throughout the whole paper) a function
$\rho\in\Cci{\R^d}$, $\rho\ge0$, with $\spt\rho\sse B(0,1/2)$  and
$\int\rho(x)\,dx=1$, and we define
\begin{align}
\label{md-def}
\md:=\max\{1, \norm{\Delta \rho}_1 \}.
\end{align}

The following proposition contains our basic estimate for sets $\Omega \in \OO_1$.
 In the statement we will use, for given $1 \le r < t$, the quantities $\alpha$
and $\beta$ (depending on $r$ and $t$) defined by
 \begin{align*}
    \beta := (t-r)/2
 \end{align*}
and
 \begin{align}
    \alpha := \frac{\min\{\beta, 1\}}{16 \md r},
    \label{(2.7)}
\end{align}
 with $\md$ from \eqref{md-def}.

  \begin{proposition} \label{2.2}
 For any $d\in\N$ there exist constants $C,\cnought > 0$,
 such that for all $\Omega \in \OO_1$,
 for all $1 \le r < t < \inf\sigmaess(H_\Omega)$, and for all
  $n \ge \max\{1,2^{d/2} \cnought / \sqrt\beta\}$ we have
\begin{align}
  \norm{{\Phi}}_1 \le  C\biggl(\! n^{d/2} \sqrt{\NN_t(H_\Omega)}
+  \frac{\sqrt r} \beta
    \Bigl( \frac{n^{2d-2}}\alpha + \frac{n^{d-1}}{\alpha^d} \Bigr)
   \ee^{-\alpha n} \NN_t(H_\Omega)\!\biggr)  \norm{{\Phi}}_2 ,
   \label{(2.9)}
\end{align}
 for all eigenfunctions ${\Phi}$ of $H_\Omega$ associated with
an eigenvalue $\la_k(H_\Omega) \in [1,r]$.
\end{proposition}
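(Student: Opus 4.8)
\ The plan is to carry out the three-part scheme described in the Introduction, working directly with $\Omega\in\OO_1$; no scaling is needed here. Cover $\R^d$ by the cubes $Q_{n,j}$ ($j\in\Z^d$) of edge length $2n$ centred at $nj$; at most $2^d$ of them meet at any point. Put
\begin{align*}
 J_n:=\bigl\{j\in\Z^d:\inf\sigma(H_{\Omega\cap Q_{n,j}})<t\bigr\},
 \qquad F_n:=\Omega\cap\bigcup_{j\in J_n}Q_{n,j},
\end{align*}
and let $\tilde F_n\sse\tilde{\tilde F}_n$ be the enlargements of $F_n$ by one, resp.\ two, further layers of the cubes $Q_{n,j}$. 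Each $\Omega\cap Q_{n,j}$ is bounded, so $H_{\Omega\cap Q_{n,j}}$ has compact resolvent and $j\in J_n$ forces $\NN_t(H_{\Omega\cap Q_{n,j}})\ge1$. To bound $|J_n|$, split $\Z^d$ into the $3^d$ cosets of $(3\Z)^d$: within a fixed coset the cubes $Q_{n,j}$ are pairwise disjoint (indeed at mutual distance $\ge n$), so the orthogonal sum of the $H_{\Omega\cap Q_{n,j}}$ over that coset equals the Dirichlet Laplacian of an open subset of $\Omega$, whence by domain monotonicity ($\NN_t(H_U)\le\NN_t(H_\Omega)$ for $U\sse\Omega$) it has at most $\NN_t(H_\Omega)$ eigenvalues $\le t$; the coset thus contributes at most $\NN_t(H_\Omega)$ indices to $J_n$. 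Hence $|J_n|\le3^d\NN_t(H_\Omega)$ and $\vol(\tilde{\tilde F}_n)\le C n^d\NN_t(H_\Omega)$.

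Next I would establish the spectral gap $\inf\sigma(H_{G_n})\ge r+\beta$ on $G_n:=\Omega\setminus F_n$. Build a $C^\infty$ partition of unity $(\chi_j)_{j\in\Z^d}$ with $\sum_j\chi_j^2\equiv1$, $\spt\chi_j\sse Q_{n,j}$ and $\sum_j|\nabla\chi_j|^2\le\cnought^2 2^d/n^2$ (obtained by mollifying products of indicator functions with the fixed mollifier $\rho$; the second derivatives of the $\chi_j$, controlled through $\md$, will re-enter in Lemma~2.3). For $u$ in the form domain of $H_{G_n}$ one has $\chi_j u\in\Hnought^1(\Omega\cap Q_{n,j})$, which vanishes when $j\in J_n$ since then $Q_{n,j}\cap G_n=\emptyset$, while for $j\notin J_n$ the definition of $J_n$ gives $\norm{\nabla(\chi_j u)}_2^2\ge t\norm{\chi_j u}_2^2$. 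Because $\sum_j\chi_j\nabla\chi_j=\tfrac12\nabla\bigl(\sum_j\chi_j^2\bigr)=0$ the cross terms cancel, and the IMS localization formula gives
\begin{align*}
 \scapro{H_{G_n}u}{u}
 &=\sum_j\norm{\nabla(\chi_j u)}_2^2-\sum_j\norm{\,|\nabla\chi_j|\,u}_2^2\\
 &\ge\Bigl(t-\tfrac{\cnought^2 2^d}{n^2}\Bigr)\norm u_2^2\ge(t-\beta)\norm u_2^2=(r+\beta)\norm u_2^2,
\end{align*}
the second inequality being precisely where the hypothesis $n\ge2^{d/2}\cnought/\sqrt\beta$ is used (and $n\ge1$ keeps the later powers of $n$ harmless).

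With these two facts I would invoke the quantitative, distance-dependent exponential decay of $\Phi$, isolated below as Lemma~2.3 and proved in Section~3. Since $\Phi$ is an eigenfunction of $H_\Omega$ for $\la_k\le r$ whereas $H_{G_n}\ge r+\beta$, Lemma~2.3 applies—the enlargements $\tilde F_n$, $\tilde{\tilde F}_n$ providing the room required by the cut-off functions in that lemma—and yields a bound of the form $\int_\Omega\ee^{2\alpha\dist(x,F_n)}|\Phi(x)|^2\di x\le C\frac r{\beta^2}\norm\Phi_2^2$, with $\alpha$ as in the statement. Granting this, write $\norm\Phi_1=\int_{\tilde{\tilde F}_n}|\Phi|+\int_{\Omega\setminus\tilde{\tilde F}_n}|\Phi|$. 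Cauchy--Schwarz bounds the first integral by $\vol(\tilde{\tilde F}_n)^{1/2}\norm\Phi_2\le C n^{d/2}\NN_t(H_\Omega)^{1/2}\norm\Phi_2$, which is the first term of \eqref{(2.9)}. For the second, apply Cauchy--Schwarz against the weight $\ee^{-2\alpha\dist(x,F_n)}$: since $\dist(\cdot,F_n)\ge n$ on $\Omega\setminus\tilde{\tilde F}_n$ and $F_n$ is a union of at most $3^d\NN_t(H_\Omega)$ cubes of edge $2n$, a direct computation of $\int_{\Omega\setminus\tilde{\tilde F}_n}\ee^{-2\alpha\dist(x,F_n)}\di x$ (slicing by the level sets of $\dist(\cdot,F_n)$, whose $(d-1)$-dimensional measure is a polynomial of degree $d-1$ in $n$ and the distance, and pulling out the factor $\ee^{-2\alpha n}$) bounds it by $C\NN_t(H_\Omega)\ee^{-2\alpha n}$ times an explicit function of $n$ and $1/\alpha$; combined with the weighted $L_2$-estimate from Lemma~2.3 this produces the second term of \eqref{(2.9)}, with the claimed dependence on $r$, $\beta$, $\alpha$ and $n$.

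I expect the true difficulty of the whole argument to sit inside Lemma~2.3: obtaining exponential decay measured by $\dist(x,F_n)$—rather than merely outside a large ball containing $F_n$—with the explicit rate $\alpha=\min\{\beta,1\}/(16\md r)$ and a controlled constant calls for an Agmon/Combes--Thomas-type estimate whose sub-optimal rate and whose factor $\md$ are artefacts of the crude comparison used; that is the subject of Section~3. Within the proof of Proposition~\ref{2.2} itself the only delicate points are organizational: nesting $F_n\sse\tilde F_n\sse\tilde{\tilde F}_n$ so that the IMS bound on $G_n$ and the hypotheses of Lemma~2.3 hold simultaneously, and making sure the IMS error term is genuinely $O(n^{-2})$ uniformly in $r$, $t$ and the geometry of $\Omega$.
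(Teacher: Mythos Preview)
Your overall architecture matches the paper's: the bound on $|J_n|$, the IMS estimate giving $\inf\sigma(H_{G_n})\ge r+\beta$, the split $\norm{\Phi}_1=I_{n,1}+I_{n,2}$, and the Cauchy--Schwarz treatment of $I_{n,1}$ are all carried out the same way (the paper argues $|J_n|\le3^d\NN_t$ by extracting a disjoint subfamily rather than by cosets of $(3\Z)^d$, but the two arguments are equivalent). The genuine difference is in the decay step. The paper's Lemma~2.3 is \emph{not} a weighted $L_2$ bound: it is a cube-by-cube estimate $\norm{\check\chi_j(1-\xi_n)\Phi}_1\le C\frac{\sqrt r}{t-r}\sum_{\ell\in Z_n}\ee^{-\alpha|j-\ell|}$, obtained from a Combes--Thomas (boosting) resolvent bound $\norm{\check\chi_k(H_{G_n}-\la)^{-1}\check\chi_\ell}\le\frac{C}{s-r}\ee^{-\alpha|k-\ell|}$; this is where the mollifier $\rho$ and hence $m_0$ enter, via the smoothed linear phase $f_{R,k,\ell}=\rho_R*((\phi_{k,\ell}\wedge R)\vee(-R))$. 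Summing over $j\in Y_n$ and $\ell\in Z_n$ produces the factor $n^{d-1}\NN_t$ (from $|Z_n|$) times $(\tfrac{n^{d-1}}{\alpha}+\tfrac{1}{\alpha^d})\ee^{-\alpha n}$ (from the exponential sum), and hence the precise form of the second term in \eqref{(2.9)}.

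Your Agmon route is a legitimate alternative, but the inequality you wrote down is false as stated: since $\ee^{2\alpha\dist(x,F_n)}\ge1$, the left side is at least $\norm{\Phi}_2^2$, while $C r/\beta^2$ can be arbitrarily small. What one actually proves is $\int_{G_n}\ee^{2\alpha\dist(x,\spt\nabla\xi_n)}|(1-\xi_n)\Phi|^2\di x\le\frac{C}{(\beta-\alpha^2)^2}\norm{\eta}_2^2\le C\frac{r}{\beta^2}\norm{\Phi}_2^2$, using $(H_{G_n}-\la_k)((1-\xi_n)\Phi)=\eta$ with $\spt\eta\sse\spt\nabla\xi_n$ and the Lipschitz weight $w=\ee^{\alpha\dist(\cdot,\spt\nabla\xi_n)}$; note that in this version $m_0$ never appears (it is an artefact of the paper's smoothing of the boost, not of the partition of unity). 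Cauchy--Schwarz then gives $I_{n,2}\le C\frac{\sqrt r}{\beta}\sqrt{\NN_t}\,\ee^{-\alpha n}\bigl(\tfrac{n^{d-1}}{\alpha}+\tfrac{1}{\alpha^d}\bigr)^{1/2}\norm{\Phi}_2$, which is \emph{stronger} than the second term in \eqref{(2.9)} (it has $\sqrt{\NN_t}$ and a square root of the polynomial in $n,1/\alpha$), so the proposition still follows---but your claim that it reproduces ``the second term of \eqref{(2.9)} with the claimed dependence'' is not accurate.
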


 The presence of $\NN_t = \NN_t(H_\Omega)$ takes care of
 situations where there is a ``cloud'' of eigenvalues below $t$.
 Examples of dumb-bell type show that at least a factor $\sqrt{\NN_t}$ appears
  to be necessary. We emphasize that the trivial estimate, valid for all sets
 $\Omega$ of bounded volume,  $\norm{{\Phi}}_1 \le |\Omega|^{1/2} \norm{{\Phi}}_2$,
 is often inadequate, and an estimate in terms of
 spectral data seems to be more appropriate and desirable.

 The constant $\cnought$ appearing in the assumptions of Proposition~\ref{2.2}
 depends solely on the IMS-partition of unity $(\Psi_j)_{j \in \Z^d}$
 introduced at the beginning of Section \ref{proof-prop-2.2}.
 The partition of unity can be constructed in such a way
 that the constant $\cnought$ is easy to compute.
 The constant $C$ appearing on the right hand side of \eqref{(2.9)} could be 
 explicitly computed as a function of the dimension $d$.
 A proof of Proposition \ref{2.2} will be given in
 Section~\ref{proof-prop-2.2}.

In the next step we will reduce the number of free parameters
specifying $n$ first.

\begin{theorem} \label{2.3} (Case of $\OO_1$)

\noindent
 For any $d \in \N$ there exists a constant
 $C$ (depending on $d$ only) such that for any $\Omega \in \OO_1$ and any $r, 
t \in [1,
\Sigma_\Omega)$ with $r < t \leq 3r$, we have
\begin{align}
\norm{{\Phi}}_1^2 \le C
\left(\!\left(\!\frac{r^2}{t-r}\!\right)^{\!d} (\log \NN_t)^d
\NN_t
  + r^{-3}\left(\frac{r^2}{t-r}\!\right)^{\!4d}\,  \right) \norm{{\Phi}}_2^2
\label{(2.10)}
\end{align}
for all eigenfunctions ${\Phi}$ of $H_\Omega$ associated with an eigenvalue
$\la_k \in [1, r]$.
\end{theorem}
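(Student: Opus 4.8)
The plan is to apply Proposition~\ref{2.2} to the given $r,t$ with \emph{one} well‑chosen value of the free parameter $n$, and then to absorb the resulting estimate into the two terms on the right‑hand side of \eqref{(2.10)}, using only $r\ge1$ and $t\le3r$. Write $\beta=(t-r)/2$, let $\alpha$ be as in \eqref{(2.7)}, and put $L:=1+\log\NN_t$; note $L\ge1$, and that $\NN_t=\NN_t(H_\Omega)$ is a positive integer since $1=\la_1<t<\Sigma_\Omega$. The elementary facts that drive the argument are that, under $1\le r<t\le3r$,
\begin{align*}
  \alpha^{-1}\le C\,\frac{r^2}{t-r},\qquad \frac{r^2}{t-r}\ge\frac r2,\qquad \alpha^{-1}\ge16\,\md r\,\max\{1,\beta^{-1/2}\},
\end{align*}
with $C=C(d)$; each is checked by splitting into the cases $\beta\le1$ and $\beta>1$ in \eqref{(2.7)} and using $t-r\le2r$ together with $\md,r\ge1$.

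First I would set $n:=C_1\alpha^{-1}L$ with a dimensional constant $C_1>1$; enlarging $C_1$ if necessary (it suffices that $16C_1\ge2^{d/2}\cnought$) and using $L\ge1$ together with the third inequality above, we get $n\ge C_1\alpha^{-1}\ge16\,C_1\max\{1,\beta^{-1/2}\}\ge\max\{1,2^{d/2}\cnought/\sqrt\beta\}$, so Proposition~\ref{2.2} applies and yields
\begin{align*}
  \norm\Phi_1\le C\Bigl(n^{d/2}\sqrt{\NN_t}+\frac{\sqrt r}{\beta}\Bigl(\frac{n^{2d-2}}{\alpha}+\frac{n^{d-1}}{\alpha^d}\Bigr)\ee^{-\alpha n}\NN_t\Bigr)\norm\Phi_2 .
\end{align*}
For the second summand, $\alpha n=C_1L$, so $\ee^{-\alpha n}\NN_t=\ee^{-C_1}\NN_t^{1-C_1}$, and substituting $n=C_1\alpha^{-1}L$ shows $\frac{n^{2d-2}}{\alpha}$ and $\frac{n^{d-1}}{\alpha^d}$ are both $\le C\,L^{2d-2}\alpha^{-(2d-1)}$. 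Since $C_1>1$, the factor $L^{2d-2}\NN_t^{1-C_1}=(1+\log\NN_t)^{2d-2}\NN_t^{1-C_1}$ is bounded by a dimensional constant, so the second summand is $\le C\,\frac{\sqrt r}{\beta}\,\alpha^{-(2d-1)}$; inserting $\beta^{-1}=2/(t-r)$ and $\alpha^{-1}\le C\,r^2/(t-r)$ turns this into $\le C\,r^{-3/2}\bigl(\tfrac{r^2}{t-r}\bigr)^{2d}$. For the first summand, $n^{d/2}\sqrt{\NN_t}=C_1^{d/2}(L/\alpha)^{d/2}\sqrt{\NN_t}\le C\bigl(\tfrac{r^2}{t-r}\bigr)^{d/2}L^{d/2}\sqrt{\NN_t}$.

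Squaring and using $(a+b)^2\le2a^2+2b^2$ now gives $\norm\Phi_1^2\le C\bigl(\bigl(\tfrac{r^2}{t-r}\bigr)^d(1+\log\NN_t)^d\NN_t+r^{-3}\bigl(\tfrac{r^2}{t-r}\bigr)^{4d}\bigr)\norm\Phi_2^2$, and it only remains to replace $(1+\log\NN_t)^d$ by $(\log\NN_t)^d$. If $\NN_t\ge2$ this costs a dimensional constant, since $1+\log\NN_t\le(1+1/\log2)\log\NN_t$; if $\NN_t=1$ the first term equals $\bigl(\tfrac{r^2}{t-r}\bigr)^d$, which is dominated by the second term because $\tfrac{r^2}{t-r}\ge r/2$ and $r\ge1$ force $\bigl(\tfrac{r^2}{t-r}\bigr)^{3d}\ge 2^{-3d}r^{3d}\ge 2^{-3d}r^3$. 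This proves \eqref{(2.10)}. I expect the only genuinely delicate point to be the calibration of $n$: it must be of order $\alpha^{-1}\log\NN_t$ — large enough that $\ee^{-\alpha n}$ beats both the ``cloud'' factor $\NN_t$ and the polynomial prefactors $n^{2d-2},n^{d-1}$, yet small enough that $n^{d/2}$ stays below a constant times $\bigl(r^2/(t-r)\bigr)^{d/2}(\log\NN_t)^{d/2}$, and simultaneously above the admissibility threshold of Proposition~\ref{2.2}. The inequality $\alpha^{-1}\le C\,r^2/(t-r)$, valid precisely because $r\ge1$ and $t\le3r$, is what converts the $\alpha,\beta$‑dependence of Proposition~\ref{2.2} into the $r^2/(t-r)$‑dependence of \eqref{(2.10)}, and the degeneracy $\log\NN_t=0$ at $\NN_t=1$, handled by the second term, is the sole edge case.
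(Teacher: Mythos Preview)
Your proof is correct and follows essentially the same approach as the paper --- applying Proposition~\ref{2.2} with $n$ of order $\alpha^{-1}\log\NN_t$ and then converting the $\alpha,\beta$--dependence into $r^2/(t-r)$. The only cosmetic difference is that you use a single choice $n=C_1\alpha^{-1}(1+\log\NN_t)$ throughout and absorb the edge case $\NN_t=1$ into the second term at the end, whereas the paper splits into two cases (small versus large $\log\NN_t$) with separate choices of $n$; your packaging is slightly cleaner but the substance is identical.
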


\begin{proof} With $\eta:=\frac{t-r}{2r}$ we obtain $0<\eta\le1$,
 $t=(1+2\eta)r$ and $\beta=\eta r$. We will apply Proposition \ref{2.2}
 with two different choices of $n$. We will also use the elementary estimate
 $1\le {1/ \alpha} \le 16 \md \frac r\eta$.
\smallskip

\noindent
(1) For $\log \NN_t \le \max\{1,2^{d/2 -1}c\}$, the choice
$n:=\max\{1,2^{d/2}c/\sqrt\beta\}$ yields
\begin{align}
\begin{split}
  \norm{{\Phi}}_1 
& \le C \max\left\{r^{d-1/2}\eta^{-d-1},r^{d/2} \eta^{-(3d+1)/2}\right\}
                \norm{{\Phi}}_2 \cr
  &  \le C r^{d-1/2}  \eta^{-(3d+1)/2}  \norm{{\Phi}}_2.
\end{split}
   \label{(2.11)}
\end{align}
(As a hint for the computation we mention that it is advantageous to write
\[
  \frac{\sqrt r}\beta \left(\frac{n^{2d-2}}\alpha + 
\frac{n^{d-1}}{\alpha^d} \right)
   =   \frac{\sqrt{r}}{\alpha\beta} \left(n^{2d-2} + (n/\alpha)^{d-1} 
         \right)
\]
and to note that $1/(\alpha\beta) \le C/\eta^2$.)
\smallskip

\noindent
(2) For $\log \NN_t \ge \max\{1,2^{d/2 -1}c\}$ we choose
 $n := 2\frac{\log \NN_t}\alpha$, where we note that the
 condition $n \ge \max\{1,2^{d/2}c/\sqrt\beta\}$ is easily verified.
 We then have  $ \ee^{-\alpha n} \NN_t = 1/\NN_t$ so that
\begin{align*}
      (\log{\NN}_t)^k\cdot \ee^{-\alpha n} \cdot \NN_t \le C_k \qquad (k
\in \N),
\end{align*}
where $C_k := \max\{ (\log u)^k u^{-1} \mid u \ge 1 \}$.

In the first term on the right hand side of \eqref{(2.9)} we
simply estimate $n^{d/2} \le C (r/\eta)^{d/2}(\log\NN_t)^{d/2}$.
As for the second term in the right hand side of \eqref{(2.9)} we
first observe that
\begin{align*}
  \frac{n^{2d-2}}\alpha + \frac{n^{d-1}}{\alpha^d}
   = \frac1{\alpha^{2d-1}} \left( 2^{2d-2} (\log\NN_t)^{2d-2}
       + 2^{d-1}  (\log{\NN}_t)^{d-1}  \right)
\end{align*}
so that
\begin{align*}
   \frac{\sqrt r}\beta
    \left( \frac{n^{2d-2}}\alpha + \frac{n^{d-1}}{\alpha^d} \right)
                       \ee^{-\alpha n} {\NN}_t
    \le \frac C{\sqrt r \eta} (r/\eta)^{2d-1}
  \end{align*}
with $C := (16 m_0)^{2d-1}\bigl(2^{2d-2}C_{2d-2} + 2^{d-1}C_{d-1}\bigr)$.
 Now Proposition \ref{2.2} gives
\begin{align}
   \norm{{\Phi}}_1  \le C \left( (r/\eta)^{d/2} (\log\NN_t)^{d/2} \sqrt{\NN_t}
        +  r^{-3/2} (r/\eta)^{2d} \right) \norm{{\Phi}}_2.
  \label{(2.12)}
\end{align}
(3) Note that the estimate \eqref{(2.11)} implies \eqref{(2.12)}. Inserting the
 definition of $\eta$ and taking squares one obtains \eqref{(2.10)}.
\end{proof}
 \smallskip

 We now pass from the {\class} $\OO_1$ to the {\classes} $\OO_\dparam$,
with $\dparam > 0$, by a straightforward scaling argument.

\begin{theorem}\label{2.4} (Case of $\OO_\dparam$)
 \noindent
 For any $d \in \N$ there exists a constant $C$
(depending on $d$ only) such that for any $\dparam > 0$ and
$\Omega \in \OO_\dparam$ the following estimate holds: If $r,t \in
[\Lambda,\Sigma_\Omega)$ satisfy $r < t \le 3r$, we have
\begin{align*}
   \norm{{\Phi}}_1^2 \le
C \dparam^{-d/2} \!\left(\left(\frac{r^2}{\dparam(t-r)}\right)^d
(\log \NN_t)^d{\NN_t}
   +  \left(\frac r\dparam\right)^{-3}
             \left(\frac{r^2}{\dparam(t-r)}\right)^{4d}\right)\!\norm{{\Phi}}_2^2,
\end{align*}
 for all eigenfunctions ${\Phi}$ of $H_\Omega$ associated with an
 eigenvalue $\la_k \in [\dparam,r]$.
\end{theorem}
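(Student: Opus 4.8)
The plan is to deduce the assertion from Theorem~\ref{2.3} by exploiting the scaling equivalence $\Omega\in\OO_\dparam \iff \sqrt\dparam\,\Omega\in\OO_1$ recorded above. Fix $\dparam>0$ and $\Omega\in\OO_\dparam$, and set $\Omega':=\sqrt\dparam\,\Omega$, so that $\Omega'\in\OO_1$. Given an eigenfunction $\Phi$ of $H_\Omega$ associated with $\la_k$, I would introduce the rescaled function $\Phi'(y):=\Phi(y/\sqrt\dparam)$ for $y\in\Omega'$. A direct computation with the chain rule gives $-\Delta\Phi'=(\la_k/\dparam)\Phi'$, so $\Phi'$ is an eigenfunction of $H_{\Omega'}$ with eigenvalue $\la_k/\dparam$; more generally the dilation $y=\sqrt\dparam\,x$ implements a unitary equivalence under which $H_\Omega$ corresponds to $\dparam\,H_{\Omega'}$, whence $\sigma(H_{\Omega'})=\dparam^{-1}\sigma(H_\Omega)$ and in particular $\inf\sigma(H_{\Omega'})=1$.

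Next I would record how the remaining quantities transform under this dilation. Since the eigenvalues of $H_{\Omega'}$ are exactly $\dparam^{-1}$ times those of $H_\Omega$, we have $\NN_{s}(H_{\Omega'})=\NN_{\dparam s}(H_\Omega)$ for all $s$; in particular $\NN_{t/\dparam}(H_{\Omega'})=\NN_t(H_\Omega)$, and the essential-spectrum threshold obeys $\Sigma_{\Omega'}=\Sigma_\Omega/\dparam$. The change of variables $x=y/\sqrt\dparam$ yields $\norm{\Phi'}_1=\dparam^{d/2}\norm{\Phi}_1$ and $\norm{\Phi'}_2^2=\dparam^{d/2}\norm{\Phi}_2^2$.

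Then I would apply Theorem~\ref{2.3} to $\Omega'\in\OO_1$ with the parameters $r':=r/\dparam$ and $t':=t/\dparam$ and to the eigenfunction $\Phi'$, which is associated with $\la_k/\dparam\in[1,r/\dparam]$. The hypotheses hold: $r',t'\in[1,\Sigma_{\Omega'})$ because $r,t\in[\dparam,\Sigma_\Omega)$, and $r'<t'\le 3r'$ because $r<t\le 3r$. Using $(r')^2/(t'-r')=r^2/(\dparam(t-r))$, $(r')^{-3}=(r/\dparam)^{-3}$, and $\NN_{t'}(H_{\Omega'})=\NN_t(H_\Omega)$, the estimate \eqref{(2.10)} applied to $\Phi'$ becomes
\[
  \dparam^{d}\norm{\Phi}_1^2 \le C\left(\left(\frac{r^2}{\dparam(t-r)}\right)^{\!d}(\log\NN_t)^d\NN_t + \left(\frac r\dparam\right)^{-3}\left(\frac{r^2}{\dparam(t-r)}\right)^{\!4d}\right)\dparam^{d/2}\norm{\Phi}_2^2 ,
\]
and dividing through by $\dparam^{d/2}$ gives precisely the asserted inequality.

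There is no substantive obstacle in this argument; its entire content is the four simultaneous rescalings — of the domain, the eigenvalue, the counting function, and the $L_1$- and $L_2$-norms — and the only point demanding care is tracking the powers of $\dparam$ so that the prefactor $\dparam^{-d/2}$ emerges correctly. If one prefers to avoid pointwise manipulation of $\Phi$, exactly the same bookkeeping can be carried out through the unitary dilation $U\colon L_2(\Omega)\to L_2(\Omega')$, $(U\Phi)(y)=\dparam^{-d/4}\Phi(y/\sqrt\dparam)$, which realizes the unitary equivalence of $H_\Omega$ with $\dparam\,H_{\Omega'}$.
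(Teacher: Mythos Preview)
Your proposal is correct and follows essentially the same scaling argument as the paper: both set $\Omega'=\sqrt\dparam\,\Omega\in\OO_1$, pass to the rescaled eigenfunction, record that eigenvalues, the counting function, and $\Sigma_\Omega$ scale by $\dparam^{-1}$, and then invoke Theorem~\ref{2.3} with $r'=r/\dparam$, $t'=t/\dparam$. The only cosmetic difference is that the paper uses the unitary dilation $\tilde\Phi(x)=\dparam^{-d/4}\Phi(x/\sqrt\dparam)$ (so that $\|\tilde\Phi\|_2=\|\Phi\|_2$ and $\|\tilde\Phi\|_1=\dparam^{d/4}\|\Phi\|_1$), whereas you track the scaling of both norms for the unnormalized $\Phi'$; you yourself note this alternative at the end.
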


As will become clear later on, the factor $\frac{r^2}{\dparam(t-r)}$ in the
above theorem should be read as
$\frac{r}{\dparam} \cdot \frac{r}{t-r}$.

\begin{proof}
Let $\tilde\Omega := \sqrt\dparam\Omega$ ($\in \OO_1$).
Then $\inf\sigmaess(H_{\tilde\Omega}) = \frac 1\dparam \Sigma_\Omega$ and to
each eigenvalue $\la_k$ of $H_\Omega$ below $\Sigma_\Omega$ there
corresponds precisely
one eigenvalue $\tilde\la_k$ of $H_{\tilde\Omega}$ below $\frac 1\dparam
\Sigma_\Omega$;
in fact,
\begin{align*}
        \tilde\la_k = \frac 1\dparam \la_k.
\end{align*}
For the associated eigenfunctions of $H_{\tilde\Omega}$ we take
\begin{align*}
 \tilde  {\Phi}(x) := \dparam^{-d/4} {\Phi}(x/{\sqrt\dparam}) \qquad
(x \in
\tilde\Omega),
\end{align*}
so that, in particular,
\begin{align*}
   \norm{\tilde{\Phi}}_{L_2(\tilde\Omega)} =
\norm{{\Phi}}_{L_2(\Omega)}
   \quad \hbox{\rm and} \quad
   \norm{\tilde{\Phi}}_{L_1(\tilde\Omega)}
   = \dparam^{d/4} \norm{{{\Phi}}}_{L_1(\Omega)} .
 \end{align*}
Setting $\tilde r:=r/\dparam$, $\tilde t:=t/\dparam$ and using the
estimate \eqref{(2.10)} of Theorem~\ref{2.3} for $\tilde{\Phi}$ we
obtain
\begin{align*}
    &\norm{{{\Phi}}}_{L_1(\Omega)}  =  \dparam^{-d/4}
\norm{\tilde{\Phi}}_{L_1(\tilde\Omega)}
  \cr
  & \le
C \dparam^{-d/4} \left(\Bigl(\frac{\tilde r^2}{\tilde t-\tilde
r}\Bigr)^{d/2} (\log\NN_{\tilde
t}(H_{\tilde\Omega}))^{d/2}\sqrt{\NN_{\tilde t}(H_{\tilde\Omega})}
  + \tilde r^{-3/2}\Bigl(\frac{\tilde r^2}{\tilde t-\tilde r}\Bigr)^{2d}
\right)
\norm{\tilde{\Phi}}_{L_2(\tilde\Omega)},
\end{align*}
and the desired result follows since $\NN_{\tilde t}(H_{\tilde\Omega}) =
\NN_t(H_{\Omega})$.
\end{proof}
\smallskip

From Theorem \ref{2.4} we immediately get bounds on
$\norm{\Phi}_p$ for any $p \in [1,2]$ as, trivially,
$\norm{\Phi}_p^p \le \norm{\Phi}_1 + \norm{\Phi}_2^2$. A finer
estimate is obtained through the inequality $\norm{\Phi}_p \le
\norm{\Phi}_1^{\frac2p-1}\norm{\Phi}_2^{2(1-\frac1p)}$.

In the special case where $\sigmaess(H_\Omega) = \emptyset$, we
may take $\Lambda := \la_1$, $r := \la_k$ and $t := (1 + \theta) r
= (1 + \theta) \la_k$, with $0 < \theta \le 1$, in Theorem
\ref{2.4}, which gives the following.

\begin{corollary} \label{2.5}
For any $d \in \N$ there exists a constant $C \ge 0$ such that
the following holds:
 If $\Omega \ne \emptyset$ is an open subset of $\R^d$ with $\sigmaess(H_\Omega) = \emptyset$,
 we have
\begin{align*}
    \norm{{\Phi}}_1^2  \le   C \lambda_1^{-d/2}
\left(\theta^{-d}\Bigl(\frac{\la_k}{\la_1}\Bigr)^d
    (\log\NN_{(1+\theta)\la_k})^d \NN_{(1+\theta)\la_k}
       +  \theta^{-4d}\Bigl(\frac{\la_k}{\la_1}\Bigr)^{4d -3} \right)
\norm{{\Phi}}_2^2,
\end{align*}
for all $0 < \theta\le 1$ and
for all eigenfunctions ${\Phi}$ of $H_\Omega$ associated with the
eigenvalue $\la_k$.
\end{corollary}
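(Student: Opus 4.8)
The plan is to obtain the corollary as an immediate specialization of Theorem~\ref{2.4}. The only structural remark needed is that the hypothesis $\sigmaess(H_\Omega)=\emptyset$ gives $\Sigma_\Omega=\inf\sigmaess(H_\Omega)=\infty$; hence $\la_1=\inf\sigma(H_\Omega)<\Sigma_\Omega$, so $\Omega\in\OO_{\la_1}$ (note $\la_1>0$), and the half-line $[\la_1,\Sigma_\Omega)=[\la_1,\infty)$ contains every eigenvalue of $H_\Omega$ as well as every real number $t\ge\la_1$. In particular the parameter choices below are admissible for all $k$.

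First I would fix an eigenvalue $\la_k$ and a number $\theta\in(0,1]$, and apply Theorem~\ref{2.4} with $\dparam:=\la_1$, $r:=\la_k$ and $t:=(1+\theta)\la_k$. The hypotheses of that theorem are readily checked: $r,t\in[\dparam,\Sigma_\Omega)=[\la_1,\infty)$ because $\la_k\ge\la_1$ and $\theta>0$; the ordering $r<t\le 3r$ holds since $\theta>0$ and $1+\theta\le 2\le 3$; and the eigenvalue $\la_k$ itself lies in $[\dparam,r]=[\la_1,\la_k]$. Thus Theorem~\ref{2.4} applies to every eigenfunction $\Phi$ of $H_\Omega$ associated with $\la_k$.

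It then remains to rewrite the right-hand side. With the above choices $t-r=\theta\la_k$, so that
\[
   \frac{r^2}{\dparam(t-r)}=\frac{\la_k^2}{\la_1\theta\la_k}=\frac1\theta\,\frac{\la_k}{\la_1},
   \qquad
   \Bigl(\frac r\dparam\Bigr)^{-3}=\Bigl(\frac{\la_k}{\la_1}\Bigr)^{-3},
   \qquad
   \NN_t=\NN_{(1+\theta)\la_k}.
\]
Substituting into the estimate of Theorem~\ref{2.4}: the overall prefactor is $\dparam^{-d/2}=\la_1^{-d/2}$, the first summand becomes $\theta^{-d}(\la_k/\la_1)^d(\log\NN_{(1+\theta)\la_k})^d\,\NN_{(1+\theta)\la_k}$, and the second becomes $(\la_k/\la_1)^{-3}\,\theta^{-4d}(\la_k/\la_1)^{4d}=\theta^{-4d}(\la_k/\la_1)^{4d-3}$. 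This is exactly the asserted inequality, valid uniformly for $\theta\in(0,1]$ and for every $k$.

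I do not expect a genuine obstacle here: Corollary~\ref{2.5} is a bookkeeping consequence of Theorem~\ref{2.4}. The only points that warrant attention are the admissibility of the choice $t=(1+\theta)\la_k$ — precisely where $\Sigma_\Omega=\infty$ is used, since otherwise $t$ might exceed $\inf\sigmaess(H_\Omega)$ — and the routine algebra above. I would keep $\theta$ as a free parameter in the statement rather than fixing it, since later applications optimize the bound over $\theta$.
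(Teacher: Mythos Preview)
Your proof is correct and follows exactly the paper's approach: the corollary is stated immediately after Theorem~\ref{2.4} with the remark that one takes $\Lambda:=\la_1$, $r:=\la_k$ and $t:=(1+\theta)\la_k$, and your write-up spells out precisely this substitution together with the necessary verification of the hypotheses.
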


The estimate given above will be applied in Section 5 to obtain a
bound for the heat content $Q_\Omega$ in terms of the heat trace
$Z_\Omega$.

In many cases one will be satisfied with
the choice $\theta := 1$, while smaller $\theta$ may be of
interest if $N_t$ is of fast growth. Small $\theta > 0$ are also
important if one is interested in an estimate which depends on the
gap length $\la_{k+1} - \la_k$ (if $\la_{k+1} > \la_k$); choosing
$\theta > 0$ so small that $(1+\theta)\la_k < \la_{k+1}$ we get
$N_{(1+\theta)\la_k} = N_{\la_k} = k$.

\begin{remark} \label{2.13}
 In the special case $d=1$ one can obtain a sharper estimate by direct
calculation, and it is instructive to do that.
Any open set $\Omega \sse \R$
can be written as a countable union of pairwise disjoint open intervals
 $I_k \ne \emptyset$. If one of these intervals has infinite length,
we have $\sigmaess(H_\Omega) = [0,\infty)$ and we thus assume that
all $I_k$ have finite length $\ell_k$. In this case
the operator $H_\Omega$ has pure point spectrum
and there is an orthonormal basis of
eigenfunctions, each having support $I_k$ for some $k$.
Furthermore, $\inf\sigma(H_\Omega)=\pi^2\inf_k 1/\ell_k^2$ and
$\inf\sigmaess(H_\Omega)=\pi^2\liminf_{k\to\infty} 1/\ell_k^2$.
Assume that $\inf\sigma(H_\Omega)<\inf\sigmaess(H_\Omega)$, and let
$\la\in[\inf\sigma(H_\Omega),\inf\sigmaess(H_\Omega))$ be an eigenvalue of
$H_\Omega$. Then there is a finite
subset of the intervals $I_k$, which we may denote as $I_1, \ldots, I_K$ for
simplicity, with the property that $\la$ is an eigenvalue of $H_{I_k}$.
This means that for each $k =1, \ldots , K$ there is a number $j_k \in \N$ such that
\begin{align}\label{(*)}
    \la = \frac{\pi^2 j_k^2}{\ell_k^2};
   \end{align}
the associated normalized eigenfunction of $H_{I_k}$ is
given by
\[
    \phi_k(x) := \sqrt{2/\ell_k}\sin{\frac{\pi j_k}{\ell_k} (x - a_k)},
\]
if $I_k = (a_k,b_k)$. Here $\norm{\phi_k}_1 = \frac{2 \sqrt2} {\pi} \sqrt{\ell_k}$.

Any eigenfunction $\Phi$ of $L_2$-norm 1 of $H_\Omega$ associated
with the eigenvalue $\la$ can be written as $\Phi = \sum_{k=1}^K
\alpha_k \phi_k$ with $\alpha_k \in \C$ and $\sum_{k=1}^K
|\alpha_k|^2 = 1$. As for the $L_1$-norm of $\Phi$, we now
estimate
\[
\norm{\Phi}_1 = \sum_k |\alpha_k| \norm{\phi_k}_1 =  \frac{2\sqrt2}{\pi}
\sum_k |\alpha_k|
   \sqrt{\ell_k} \le  \frac{2 \sqrt2}{\pi} \left(\sum_k \ell_k \right)^{1/2},
\]
by the Schwarz inequality. From \eqref{(*)} we get
\[
   \ell_k = \frac{\pi j_k}{\sqrt{\la}} = \frac{\pi}{\sqrt\la} N_\la(H_{I_k}),
\]
whence $\sum_k \ell_k \le \frac{\pi}{\sqrt\la} N_\la(H_\Omega)$. This leads to the
estimate
\[
\norm{\Phi}_1^2 \le C \la^{-1/2} {N_\la(H_\Omega)} \norm{\Phi}_2^2,
\]
with $C=\frac{8}{\pi}$.

 Comparing this last estimate with Theorem~\ref{Theorem 0.1},
 we see that the leading power $-d/2$ of the eigenvalue and a factor $N_\la(H_\Omega)$
 are there. However, the one-dimensional estimate above is stronger than the estimate given
 in Theorem~\ref{Theorem 0.1}, which was to be expected because there is
 no coupling between the $H_{I_k}$.
\end{remark}

The following theorem gives an upper bound on the $L_\infty$-norm
and a lower bound for the $L_1$-norm of the eigenfunctions of the
Dirichlet
  Laplacian. It is a direct consequence
 of the domain monotonicity of the heat kernel (\cite{PS},
\cite[Theorem 2.1.6]{dav}, \cite[Theorem B.2]{st-vo-96})
and
 corresponding heat kernel bounds. Note that these bounds are valid for
 \emph{all} eigenvalues and eigenfunctions. We include this well-known material
 chiefly for the sake of completeness.

\begin{theorem} \label{2.12}
Let $\Omega \subseteq \R^d$ be open, and suppose that $\la \in (0,\infty)$
is an eigenvalue of $H_\Omega$. Then
\begin{align}\label{(1)}
   \norm{{\Phi}}_\infty \le \left(\frac\ee{2\pi d}\right)^{d/4}
\la^{d/4} \norm{{\Phi}}_2,
\end{align}
and
\begin{align}\label{(2)}
    \norm{{\Phi}}_1 \ge \left(\frac{2\pi d}\ee\right)^{d/4} \la^{-d/4}
\norm{{\Phi}}_2,
\end{align}
for any eigenfunction $\Phi$ of $H_\Omega$ associated with the eigenvalue $\la$.
\end{theorem}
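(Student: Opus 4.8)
The plan is to derive both inequalities from the pointwise on-diagonal heat kernel bound for the Dirichlet Laplacian combined with the eigenfunction equation. First I would recall that domain monotonicity of the heat kernel gives, for every $x \in \Omega$ and every $t > 0$,
\begin{align*}
  p_\Omega(x,x;t) \le p_{\R^d}(x,x;t) = (4\pi t)^{-d/2},
\end{align*}
since the Dirichlet heat kernel of $\Omega$ is dominated by the free Gaussian kernel. More generally, the Cauchy--Schwarz inequality applied to $p_\Omega(x,y;t) = \int_\Omega p_\Omega(x,z;t/2) p_\Omega(z,y;t/2)\,dz$ together with the semigroup property yields $p_\Omega(x,y;t) \le \bigl(p_\Omega(x,x;t)\,p_\Omega(y,y;t)\bigr)^{1/2} \le (4\pi t)^{-d/2}$.

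Next I would use that $\Phi$ is an eigenfunction with eigenvalue $\la$, so $\ee^{-tH_\Omega}\Phi = \ee^{-\la t}\Phi$, i.e.
\begin{align*}
   \ee^{-\la t}\Phi(x) = \int_\Omega p_\Omega(x,y;t)\,\Phi(y)\,dy
\end{align*}
for all $x \in \Omega$ and $t > 0$. Taking absolute values and applying the kernel bound pointwise,
\begin{align*}
   \ee^{-\la t}|\Phi(x)| \le (4\pi t)^{-d/2}\int_\Omega |\Phi(y)|\,dy = (4\pi t)^{-d/2}\norm{\Phi}_1,
\end{align*}
which gives $\norm{\Phi}_\infty \le \ee^{\la t}(4\pi t)^{-d/2}\norm{\Phi}_1$. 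In the other direction, using Cauchy--Schwarz on the same representation, $\ee^{-\la t}|\Phi(x)| \le \bigl(\int_\Omega p_\Omega(x,y;t)^2\,dy\bigr)^{1/2}\norm{\Phi}_2 = p_\Omega(x,x;2t)^{1/2}\norm{\Phi}_2 \le (8\pi t)^{-d/4}\norm{\Phi}_2$, so that $\norm{\Phi}_\infty \le \ee^{\la t}(8\pi t)^{-d/4}\norm{\Phi}_2$.

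The only remaining task is to optimize over $t > 0$. The function $t \mapsto \ee^{\la t} t^{-d/4}$ is minimized at $t = d/(4\la)$, and substituting this value gives $\ee^{d/4}(4\la/d)^{d/4}$ times the constant, which after collecting the $(8\pi)^{-d/4}$ yields exactly $\bigl(\frac{\ee}{2\pi d}\bigr)^{d/4}\la^{d/4}$; this is \eqref{(1)}. For \eqref{(2)}, combine \eqref{(1)} with the trivial interpolation inequality $\norm{\Phi}_2^2 \le \norm{\Phi}_1\norm{\Phi}_\infty$, which gives $\norm{\Phi}_1 \ge \norm{\Phi}_2^2/\norm{\Phi}_\infty \ge \bigl(\frac{2\pi d}{\ee}\bigr)^{d/4}\la^{-d/4}\norm{\Phi}_2$. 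I do not expect a genuine obstacle here; the one point requiring a little care is the justification of the pointwise integral representation $\ee^{-\la t}\Phi = \ee^{-tH_\Omega}\Phi$ with an honest kernel — this rests on the fact that $\ee^{-tH_\Omega}$ is a bounded integral operator from $L_2(\Omega)$ to $L_\infty(\Omega)$ (again a consequence of the on-diagonal bound and Cauchy--Schwarz), so that $\Phi$, being in $L_2$, is automatically bounded and the identity holds everywhere after choosing the smooth representative. The lower bound \eqref{(2)} is exactly the one whose proof the authors defer to Section~5, so I would not be surprised if they instead use a slightly different packaging there; the argument above is the natural self-contained route.
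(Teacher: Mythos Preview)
Your proof is correct and follows essentially the same route as the paper's: the representation $\ee^{-\la t}\Phi(x)=\int_\Omega p_\Omega(x,y;t)\Phi(y)\,dy$, Cauchy--Schwarz, the on-diagonal Gaussian bound (the paper bounds $p_\Omega\le p_{\R^d}$ and integrates $p_{\R^d}^2$ directly, whereas you first use the semigroup identity $\int p_\Omega(x,y;t)^2\,dy=p_\Omega(x,x;2t)$ and then domain monotonicity---same outcome $(8\pi t)^{-d/4}$), optimization at $t=d/(4\la)$, and finally $\norm{\Phi}_2^2\le\norm{\Phi}_1\norm{\Phi}_\infty$ for \eqref{(2)}.
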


We defer the proof to the end of Section 5 where we will work with heat kernel estimates
anyway.

 \vskip1em

We next look at the case where $\sigmaess(H_\Omega) \ne
\emptyset$. Choosing $\Lambda \le r \in [\Sigma_\Omega/4,
\Sigma_\Omega)$
and letting $t :=(r+2\Sigma_\Omega)/3$ in Theorem~\ref{2.4},
 we obtain estimates that display the
dependence on the distance between $\la_k$ and $\Sigma_\Omega$.

\begin{corollary} \label{2.6}
For any $d \in \N$ there exists a constant $C \ge 0$ such that the
following holds: If $\Omega \ne \emptyset$ is an open set in
$\R^d$ with $\sigmaess(H_\Omega) \ne \emptyset$, and if $r \in
[\max\{ \Lambda,\Sigma_\Omega/4\},\Sigma_\Omega)$, we have,
writing also $t_r := (r+2\Sigma_\Omega)/3$,
\begin{align*}
   \norm{{\Phi}}_1^2
&\le  C  \Lambda^{-d/2} \left(\left(\frac{\Sigma_\Omega^2}
     { \Lambda(\Sigma_\Omega - r)}\right)^{d} (\log\NN_{t_r})^d
          {\NN_{t_r}} \right. \cr
  & \qquad \qquad  + \left. \left(\frac{\Sigma_\Omega} \Lambda\right)^{-3}
   \left(\frac{\Sigma_\Omega^2}\Lambda(\Sigma_\Omega - r)\right)^{4d}
                \right)\norm{\Phi_{k}}_2^2,
 \end{align*}
for all eigenfunctions ${\Phi}$ of $H_\Omega$ associated with an
eigenvalue $\la_k \in [ \Lambda,r]$.
\end{corollary}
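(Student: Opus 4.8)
The plan is to deduce this directly from Theorem~\ref{2.4} by making the indicated substitution for the parameters. Given $r \in [\max\{\Lambda,\Sigma_\Omega/4\},\Sigma_\Omega)$, we set $t := t_r = (r+2\Sigma_\Omega)/3$. First I would check that this pair $(r,t)$ is admissible in Theorem~\ref{2.4}, i.e.\ that $r,t \in [\Lambda,\Sigma_\Omega)$ and $r < t \le 3r$. The inequalities $\Lambda \le r < t$ are immediate since $t$ is a strict convex combination of $r$ and $\Sigma_\Omega$ with $r < \Sigma_\Omega$; and $t < \Sigma_\Omega$ holds for the same reason. The condition $t \le 3r$ reads $(r + 2\Sigma_\Omega)/3 \le 3r$, i.e.\ $2\Sigma_\Omega \le 8r$, i.e.\ $\Sigma_\Omega/4 \le r$, which is exactly the hypothesis $r \ge \Sigma_\Omega/4$. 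So Theorem~\ref{2.4} applies to every eigenfunction $\Phi$ associated with an eigenvalue $\la_k \in [\Lambda, r]$.

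Next I would rewrite the quantity $t - r$ that appears in the bound of Theorem~\ref{2.4} in terms of the gap $\Sigma_\Omega - r$. From $t_r - r = (r + 2\Sigma_\Omega)/3 - r = \tfrac23(\Sigma_\Omega - r)$, we get
\begin{align*}
   \frac{r^2}{\Lambda(t_r - r)} = \frac{3}{2}\cdot\frac{r^2}{\Lambda(\Sigma_\Omega - r)} \le \frac{3}{2}\cdot\frac{\Sigma_\Omega^2}{\Lambda(\Sigma_\Omega - r)},
\end{align*}
using $r < \Sigma_\Omega$ in the last step. Substituting this into the estimate of Theorem~\ref{2.4} replaces $\bigl(\tfrac{r^2}{\Lambda(t-r)}\bigr)^d$ by at most $(3/2)^d \bigl(\tfrac{\Sigma_\Omega^2}{\Lambda(\Sigma_\Omega - r)}\bigr)^d$ in the first term and $\bigl(\tfrac{r^2}{\Lambda(t-r)}\bigr)^{4d}$ by at most $(3/2)^{4d}\bigl(\tfrac{\Sigma_\Omega^2}{\Lambda(\Sigma_\Omega - r)}\bigr)^{4d}$ in the second; these dimensional factors are absorbed into $C$. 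For the factor $(r/\Lambda)^{-3}$ in the second term of Theorem~\ref{2.4}, since $r \ge \Sigma_\Omega/4$ we have $(r/\Lambda)^{-3} \le 4^3 (\Sigma_\Omega/\Lambda)^{-3}$, again absorbing $4^3$ into $C$. The counting-function factors match verbatim once we note $\NN_t(H_\Omega) = \NN_{t_r}(H_\Omega)$ by our choice of $t$, and likewise for the logarithm.

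There is no real obstacle here — the statement is a specialization of Theorem~\ref{2.4} with a concrete choice of $t$, and the only genuine points to verify are the admissibility condition $t_r \le 3r$ (which pins down the lower restriction $r \ge \Sigma_\Omega/4$) and the elementary inequalities $r \le \Sigma_\Omega$ and $r \ge \Sigma_\Omega/4$ used to pass from $r$-dependent to $\Sigma_\Omega$-dependent factors. The only mild care needed is bookkeeping of the various numerical constants ($3/2$, $4^3$, etc.), all of which depend on $d$ alone and are folded into the generic constant $C$ per the disclaimer.
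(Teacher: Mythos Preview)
Your proposal is correct and follows exactly the approach indicated in the paper, which simply states (in the sentence preceding the corollary) that the result is obtained from Theorem~\ref{2.4} by choosing $t := (r+2\Sigma_\Omega)/3$. Your verification of the admissibility condition $t \le 3r$ (equivalent to $r \ge \Sigma_\Omega/4$) and the elementary bounds $r \le \Sigma_\Omega$, $r \ge \Sigma_\Omega/4$ used to pass to $\Sigma_\Omega$-dependent factors are precisely what is needed.
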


The above estimate is mainly of interest for eigenvalues $\la_k$
close to $\Sigma_\Omega$. For eigenvalues $\la_k$ close to
$\Lambda$, a better, but also more complicated, estimate would be
obtained by choosing $r \in [\Lambda, \Sigma_\Omega)$ and $t :=
\min\{3r , (r + 2\Sigma_\Omega)/3 \}$.

 The following examples illustrate various points made in the preceding text.

 \begin{examples} \label{2.14}
\vskip.5ex
 (1) There are domains $\Omega \sse \R^d$ such that $H_\Omega$ has
compact resolvent while $\R^d \setminus \Omega$ has measure zero. In fact,
 consider a sequence of pairwise disjoint open cubes $Q_k \sse \R^d$,
$k\in\N$,
 enjoying the properties
{\parindent=3em

 $(i)$ $\diam (Q_k) \to 0$, as $k \to \infty$;

 $(ii)$  $\bigcup_{k\in\N}\overline{Q}_k = \R^d$.
}

 Then the Dirichlet Laplacian of $\Omega := \bigcup_{k\in\N} Q_k$ has compact
resolvent.
 In addition to properties $(i)$ and $(ii)$ one may require that any compact subset
$K \sse \R^d$ meets only finitely many of the $Q_k$.

  To obtain a connected $\Omega'$ from the above $\Omega$ it is
enough to open small ``doors'' in the surfaces that separate the
 cubes.

  (2) Here we discuss examples of eigenfunctions which are not in $L_1$.
 Let $\Omega = \bigcup_1^\infty I_k \sse \R$ be the disjoint union of
open intervals of length $1$, $\phi_0$ the normalized
eigenfunction of $H_{(0,1)}$ to the lowest eigenvalue $\la_0$.
Then $\la_0\in\sigmaess(H_\Omega)$. Let $\phi_k$ be the translate
of $\phi_0$ to $I_k$, and let $\alpha \in \ell_2\setminus\ell_1$.
Then
\begin{align*}
\phi:=\sum_k \alpha_k \phi_k \in L_2(\Omega)
\end{align*}
is an eigenfunction of $H_\Omega$ to the eigenvalue $\la_0$, but
$\phi \notin L_1(\Omega)$.

 It is easy to generalize this idea to
 higher dimensions.
 Finding examples of \emph{domains} $\Omega$ in $\R^d$, for $d \ge 2$, with the
 property that $H_\Omega$ has an eigenfunction which is not in $L_1$ seems
 to be much harder. One might think of a quantum wave guide
 perturbed in such a way that an eigenvalue is
 generated right at a boundary point of the essential spectrum.

 For the sake of comparison we note that there are examples of
 Schr\"odinger eigenfunctions on $(0,\infty)$ which are not in $L_1$
 (see \cite{east-kalf-82}); the associated eigenvalues belong
 to the essential spectrum.

(3) It is illuminating to compare the situation of $m$ disjoint
balls with the case where $m$ balls are connected by thin
passages, as in a dumb-bell domain for $m=2$. Here one can see
several aspects of the presence of $N_{2\la_k}(H_\Omega)$ in our
estimates.

 We begin with a (disconnected) open
set $\Omega_m \subseteq \R^d$ consisting of $m$ pairwise disjoint
open balls of radius $1$, say. Let $\la_1$ denote the lowest
eigenvalue of the Dirichlet Laplacian on such a ball. Then the
lowest eigenvalue of $H_{\Omega_m}$ is $\la_1$ and the associated
eigenspace has dimension $m$. It is easy to see that there
 is an eigenfunction $\Phi$ of $H_{\Omega_m}$ to the eigenvalue $\la_1$
with the properties $\norm{\Phi}_2 = 1$ and $\norm{\Phi}_1^2 = m$.
Here an estimate involving  $N_{\la_1}(H_{\Omega_m})$ (instead of
 $N_{2\la_1}(H_{\Omega_m})$) would be possible.

We now generalize domains of dumb-bell type. For $2 \le m \in \N$,
we place
$m$ balls of radius $1$ at the corners of a regular $m$-gon with edge
length $3$,  and connect each of these balls with its two neighbors
by narrow passages of width $0 < \eps \le 1$ along the edges. Call
these domains ${\Omega_{m,\eps}}$. By Perron-Frobenius theory, the
ground state eigenvalue $\lambda_{1; m, \eps}$ of
${\Omega_{m,\eps}}$ is simple and the associated eigenfunction
$\Phi_{1; m,\eps}$ can be chosen strictly positive; furthermore,
$\Phi_{1;m ,\eps}$ is invariant under rotation of the corners. Let
$\norm{\Phi_{1;m,\eps}}_2 = 1$. As $\eps \downarrow 0$, monotone
convergence of quadratic forms, combined with compactness, implies
that $\norm{\Phi_{1;m,\eps}}_1^2 \to m$.

In the disconnected case, we have $N_{\la_1}(H_\Omega) = m$, while
$N_{\la_1}(H_{\Omega_{m,\eps}})$ is equal to one in the connected
case. In the connected case, however, there is a cluster of $m$
eigenvalues close to $\la_1$ (for $\eps > 0$ small), and
$N_{2\la_1}(H_{\Omega_{m,\eps}}) \to m$ as $\eps \downarrow 0$.
Therefore, in the connected case the estimate should better
contain a factor like $N_{t}(H_{\Omega_{m,\eps}})$, with suitable
$t>\la_1$.

Note that, in both cases, the lower bound of
Theorem~\ref{2.12} does not capture the above behavior since it
provides a constant which is independent of $m$.

  (4) Examples of open sets $\Omega \subseteq \R^d$ with discrete eigenvalues
 located in a gap of the essential spectrum can be obtained by suitable
  perturbations of periodic quantum wave-guides. Consider
 open, connected, periodic sets $\Omega_0 \sse \R^2$ of the form
 \[
 \Omega_0 = \{(x,y)\in\R^2 \mid f_1(x) < y < f_2(x) \}
\]
  where $f_1$ and $f_2$ are smooth periodic functions of the same period
 satisfying  $f_1(x) < f_2(x)$. The spectrum of
  $H_{\Omega_0}$ is pure essential spectrum. In this class it is easy to
 find domains with a spectral gap.
  The simplest examples are obtained by joining discs $B((k,0),1/4)$
 ($k \in \Z$) by narrow passages along the $x$-axis, but there are also
  more demanding examples like the ones studied by Yoshitomi \cite{yo-98}.
   Local perturbations of
  the boundary of $\Omega_0$ may produce discrete eigenvalues below the essential
  spectrum, but also discrete eigenvalues inside a given gap of the essential
 spectrum. See for example \cite{Post}.

(5)  We finally discuss a class of examples which
are closely related to Remark \ref{2.13}.
Let $\Omega_0 \subseteq \R^d$ be open and bounded.
Let $(\ell_k)_{k\in\N}$ be a sequence in $(0,\infty)$, and let $\Omega$ be
the disjoint union of a sequence  $(\Omega_k)$, where $\Omega_k$ is a
translate of $\ell_k\Omega_0$, for all $k \in \N$. 
For a dilation
$\ell \Omega_0$, with $\ell > 0$, it follows from the lower bound given
in \eqref{e4} below that
\begin{align*} 
   N_\la(H_{\ell \Omega_0}) = N_{\ell^2 \la}(H_{\Omega_0}) \ge c_0 \ell^d \la^{d/2}
\end{align*}
for all eigenvalues $\la$ of $H_{\ell \Omega_0}$, with a positive 
constant $c_0$.

Assume that
$\inf\sigma(H_\Omega) < \inf\sigmaess(H_\Omega)$,
and let
$\la \in [\inf\sigma(H_\Omega),\inf\sigmaess(H_\Omega))$ be an 
eigenvalue of $H_\Omega$, with associated eigenfunction $\Phi$. 
Arguing as in Remark~\ref{2.13}, one then obtains that
\[
\|\Phi\|_1^2
\le\frac{\vol(\Omega_0)}{c_0}\lambda^{-d/2}N_\lambda(H_\Omega)\|\Phi\|_2^2.
\]

 \end{examples}

For completeness we include here a simple lower bound
for the eigenvalue counting function $N_\la(H_\Omega)$ which does not invoke
Weyl's Theorem and which comes  with an explicit constant.

Let $\Omega$ be an open set in $\R^d$ and suppose that 
$\sigmaess(H_\Omega)=\emptyset$.
Let $\CC(\Omega)$ be the collection of open cubes
contained in $\Omega$ and define
\begin{equation*}
\gamma(\Omega):=\sup\{\vol(A)\mid A\in \CC(\Omega)\}.
\end{equation*}
Let $A\in\CC(\Omega)$, $\vol(A)=a^d$.
By domain monotonicity of the Dirichlet eigenvalues we have that
\begin{align*}
\begin{split}
N_t(H_\Omega)&\ge N_t(H_{A}) 
=\bigl|\{(k_1,\ldots,k_d)\in\N^d\mid\pi^2(k_1^2+\cdots+k_d^2)\le t
a^2\}\bigr| \\ &\ge \bigl|\{k\in \N\mid d\pi^2k^2 \le t
a^2\}\bigr|^d  
\ge \left[\frac{at^{1/2}}{\pi
d^{1/2}} \right]^d.
\end{split}
\end{align*} 
Since $\max\{[x],1\}\ge x/2,$ we conclude that
\begin{equation*}
N_t(H_\Omega)\ge\left(\frac{at^{1/2}}{2\pi d^{1/2}} \right)^d=
(2\pi)^{-d}d^{-d/2}\vol(A)t^{d/2}\qquad (t\ge \lambda_1).
\end{equation*}
Taking the supremum over all $A\in\CC(\Omega)$ we finally obtain
the lower bound
\begin{equation}\label{e4}
N_t(H_\Omega)\ge
(2\pi)^{-d}d^{-d/2}\gamma(\Omega)t^{d/2}\qquad (t\ge \lambda_1).
\end{equation}

\section{Proof of Proposition \ref{2.2}}

\label{proof-prop-2.2}

 We define coverings of $\R^d$ by cubes $Q_{n,j}$ ($j \in \Z^d$), for
 $n \in \N$, and subordinate IMS-partitions of unity $(\Psi_{n,j})_{j\in\Z^d}$.
 Let $Q_0 := (-1,1)^d$ denote the standard cube of side length 2 centered at
 the point $0 \in \R^d$, and let $Q_j := Q_0 + j$, for $j \in \Z^d$, denote the
translates of $Q_0$. Pick some non-negative function
$\psi \in C_c^\infty(Q_0)$, with
$\psi(x) \ge 1$ for all $x\in\frac12 Q_0$.
Let $\psi_j \in C_c^\infty(Q_j)$ be defined by $\psi_j(x) := \psi_0(x
-j)$. Extending the $\psi_j$ by zero to all of $\R^d$, we note
that the function
\begin{align*}
   w := \sum_{j \in \Z^d} \psi_j^2
\end{align*}
is periodic and positive. We now define the
IMS-partition of unity $(\Psi_j)_{j\in\Z^d}$
(\cite{cy-fr-ki-si}) by
\begin{align*}
   \Psi_j := \frac{\psi_j}{\sqrt w} \qquad (j \in \Z^d),
\end{align*}
so that $\sum_{j \in \Z^d} \Psi_j^2(x) = 1$ for all $x \in \R^d$. (Notice the
square; this is {\it not} a standard partition of unity!)
Obviously $\spt\Psi_j \sse Q_j$ for all $j \in \Z^d$. Furthermore,
$\Psi_j$ is a translate of $\Psi_0$, and thus
\begin{align}
\cnought:=   \norm{\nabla \Psi_0}_\infty=\norm{\nabla \Psi_j}_\infty \qquad
(j \in \Z^d).
    \label{(3.3)}
\end{align}
(It would be easy to indicate an upper bound for $\cnought$
in terms of $\norm{\nabla \psi}_\infty$.)

We finally produce scaled versions defined as
\begin{align*}
\Psi_{n,j} := \Psi_j\bigl(\tfrac\cdot n) \qquad (n \in \N,
\ j\in \Z^d);
\end{align*}
notice that $\Psi_{n,j}$ has support in the cube $Q_{n,j} := n Q_j = nQ_0 + nj$.
Then
\begin{align*}
    \sum_{j \in \Z^d} \Psi^2_{n,j}(x) = 1 \qquad (x \in \R^d),
\end{align*}
and
\begin{align}
     \norm{\nabla \Psi_{n,j}}_\infty = \cnought/n \qquad (n \in \N,\ j \in
\Z^d).
     \label{(3.6)}
\end{align}

 For $1 < t < \inf\sigmaess(H_\Omega)$ and $n\in\N$, we now let
\begin{align*}
        \JJ(n,t) := \{ j \in \Z^d \mid \la_1(H_{\Omega \cap
   Q_{n,j}}) < t \}.
\end{align*}
 We then define
\begin{align*}
    F_n := \bigcup_{j \in \JJ(n,t)} Q_{n,j}. 
\end{align*}
For later use we also introduce
\begin{align*}
  {\tilde Q}_0 := 2\,Q_0 = (-2,2)^d, \qquad {\tilde Q}_{n,j} := n {\tilde
Q}_0 + nj,
   \qquad {\tilde F}_n := \bigcup_{j \in \JJ(n,t)} {\tilde Q}_{n,j},
\end{align*}
\begin{align*}
   {\tilde{\tilde Q}}_0 := 3\,Q_0 = (-3,3)^d, \qquad {\tilde{\tilde Q}}_{n,j}
    := n    {\tilde{\tilde Q}}_0 + nj,
    \qquad {\tilde{\tilde F}}_n := \bigcup_{j \in \JJ(n,t)} {\tilde{\tilde
Q}}_{n,j}.
\end{align*}
We then have ${F}_n \sse  {\tilde{F}}_n \sse  {\tilde{\tilde{F}}}_n$, and
\begin{align*}
     \dist({F}_n, \partial {\tilde{F}}_n) \ge n,
    \qquad \dist( {\tilde{F}}_n, \partial {\tilde{\tilde{F}}}_n) \ge n,
\end{align*}
for all $n\in \N$.

The following lemma shows that there is only a finite number of ``cells''
$Q_{n,j}$ such that the
infimum of the spectrum of the Dirichlet Laplacian of $\Omega \cap Q_{n,j}$ is
smaller than $t$.

\begin{lemma} \label{3.1}
Let $1 < t < \inf\sigmaess(H_\Omega)$ and let $n\in\N$.
Then $\JJ(n,t)$ is finite and
\begin{align*}
    |\JJ(n,t)| \le 3^d\, {\NN}_t(H_\Omega),
\end{align*}
where $|\JJ(n,t)|$ denotes the number of elements in $\JJ(n,t)$.
\end{lemma}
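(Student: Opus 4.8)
The plan is to bound $|\JJ(n,t)|$ via a variational/counting argument: each index $j \in \JJ(n,t)$ witnesses an eigenvalue of $H_{\Omega \cap Q_{n,j}}$ below $t$, and these local eigenvalues should ``assemble'' into eigenvalues of $H_\Omega$ below $t$, up to the bounded overlap of the cells $Q_{n,j}$. Concretely, for each $j \in \JJ(n,t)$ pick a normalized eigenfunction $u_j \in \Hnought^1(\Omega \cap Q_{n,j})$ with $\scapro{\nabla u_j}{\nabla u_j} = \la_1(H_{\Omega \cap Q_{n,j}}) < t$; extending by zero, $u_j \in \Hnought^1(\Omega)$ and $\scapro{H_\Omega u_j}{u_j} < t$. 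First I would observe that the cubes $Q_{n,j} = nQ_0 + nj$ with $Q_0 = (-1,1)^d$ have the property that any point of $\R^d$ lies in at most $3^d$ of them (since $Q_0$ has side $2$ and the centers are on the lattice $n\Z^d$), so the $u_j$ have ``finite multiplicity of overlap'' bounded by $3^d$.

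Next I would invoke the standard consequence of the min-max principle: if $V \sse \Hnought^1(\Omega)$ is a subspace such that $\scapro{H_\Omega v}{v} < t\,\norm{v}_2^2$ for all $0 \ne v \in V$, then $\dim V \le \NN_t(H_\Omega)$ (here $\NN_t = \trace \EE{(-\infty,t]}{H_\Omega}$, so we want the count below $t$; since we only need $< t$, this is clean, and when $t < \inf\sigmaess$ this is exactly the number of eigenvalues $\le t$). So it suffices to produce, from the family $(u_j)_{j \in \JJ(n,t)}$, a subspace of dimension at least $|\JJ(n,t)|/3^d$ on which the form is bounded by $t$ times the $L_2$-norm squared. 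Partition $\JJ(n,t)$ into at most $3^d$ classes $\JJ_1, \dots, \JJ_{3^d}$ such that within each class the cubes $Q_{n,j}$ are pairwise disjoint — this is possible by a coloring argument on $\Z^d$ reduced mod the appropriate lattice (the overlap pattern of the $Q_{n,j}$ depends only on $j \bmod$ a fixed finite quotient, and a standard $3^d$-coloring works). Within a single class, the $u_j$ ($j \in \JJ_\ell$) have pairwise disjoint supports, hence are orthogonal in $L_2$ and $\scapro{H_\Omega(\sum c_j u_j)}{\sum c_j u_j} = \sum |c_j|^2 \la_1(H_{\Omega \cap Q_{n,j}}) < t \sum |c_j|^2 = t\,\norm{\sum c_j u_j}_2^2$. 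Thus each class yields a subspace of dimension $|\JJ_\ell|$ with the required property, so $|\JJ_\ell| \le \NN_t(H_\Omega)$, and summing over the at most $3^d$ classes gives $|\JJ(n,t)| = \sum_\ell |\JJ_\ell| \le 3^d \NN_t(H_\Omega)$. Finiteness of $\JJ(n,t)$ is then immediate since $\NN_t(H_\Omega) < \infty$ because $t < \inf\sigmaess(H_\Omega)$.

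The main obstacle — really the only non-routine point — is the combinatorial step of splitting $\JJ(n,t)$ into $3^d$ disjointness classes with the correct constant. One must check that $3^d$ colors actually suffice: the cube $Q_{n,j}$ and $Q_{n,j'}$ overlap iff $\norm{j - j'}_\infty \le 1$, i.e.\ iff $j - j' \in \{-1,0,1\}^d$, so coloring $j$ by $j \bmod 3 \in (\Z/3\Z)^d$ (componentwise) guarantees that any two indices of the same color differ, in at least one coordinate, by a nonzero multiple of $3$, hence are $\ge 3 > 1$ apart in that coordinate, hence their cubes are disjoint. This gives exactly $3^d$ color classes. Everything else — the zero-extension preserving the form inequality, orthogonality of disjointly supported functions, and the min-max bound on the dimension — is standard, so I would state these briefly and spend the bulk of the write-up on the coloring and the assembly of the test subspace. (One could instead avoid the coloring by a direct trace/positivity argument: $\sum_{j} \indic_{\Omega \cap Q_{n,j}} \le 3^d \indic_\Omega$ and the localized operators $H_{\Omega \cap Q_{n,j}}$ sit below $H_\Omega$ in form sense, but the coloring argument is the most transparent and yields the stated constant $3^d$ directly.)
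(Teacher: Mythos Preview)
Your proof is correct and follows essentially the same strategy as the paper: pick test functions supported in the cubes $Q_{n,j}$ with Dirichlet form below $t$, pass to a subfamily with pairwise disjoint supports, and apply min--max. The only cosmetic difference is in the combinatorial reduction---the paper selects a single pairwise-disjoint subfamily $J'\subseteq J$ whose closed cubes cover the centers $nJ$ (giving $|J|\le 3^d|J'|$ and $|J'|\le N_t$), whereas you partition $J(n,t)$ into $3^d$ color classes via $j\bmod 3$ and bound each class separately; both arguments exploit the fact that $Q_{n,j}\cap Q_{n,j'}\ne\emptyset$ iff $\|j-j'\|_\infty\le 1$ and yield the same constant.
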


\begin{proof}
Let $\JJ\sse\JJ(n,t)$ be finite. There exists $\JJ'\sse\JJ$ such that the
cubes $(Q_{n,j})_{j\in\JJ'}$ are pairwise disjoint and
$n\JJ\sse\bigcup_{j\in\JJ'}\overline{Q_{n,j}}$.
The latter property implies that $|\JJ|\le 3^d|\JJ'|$.

For each of the cubes $Q_{n,j}$ ($j\in\JJ'$) there exists a function
$\phi_j \in
C_c^\infty(\Omega \cap Q_{n,j})$  with $\norm{\phi_j} = 1$ and
$\norm{\nabla \phi_{j}}^2 < t$.
 Then the min-max principle, applied to the subspace spanned by the set
 $\{\phi_j;\, j\in\JJ'\}$, implies that $|\JJ'|\le{\NN}_t(H_\Omega)$.
 The assertions follow from these two inequalities.
\end{proof}

Below we obtain a lower bound for the spectrum of the Dirichlet
Laplacian in $G_n := \Omega \setminus \overline {F_n}$.
\begin{lemma}\label{3.2}
 Let $1 < s < t < \inf\sigmaess(H_\Omega)$ and $n \ge
 n_0:=2^{d/2}\cnought/\sqrt{t-s}$,  with $\cnought$ from \eqref{(3.3)}.
 Let $H_{G_n}$ denote the Dirichlet Laplacian of
 $G_n := \Omega \setminus \overline {F_n}$. Then
\begin{align*}
   \inf \sigma(H_{G_n}) \ge s.
\end{align*}
\end{lemma}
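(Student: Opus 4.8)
The plan is to derive the lower bound from the IMS localization formula applied to the partition of unity $(\Psi_{n,j})_{j\in\Z^d}$ constructed above. Since $H_{G_n}$ is the Friedrichs extension of $-\Delta$ on $\Cci{G_n}$, that set is a form core, so it suffices to prove $\norm{\nabla u}^2\ge s\norm{u}^2$ for every $u\in\Cci{G_n}$; the inequality then extends to the whole form domain of $H_{G_n}$ by density, giving $\inf\sigma(H_{G_n})\ge s$.

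First I would record the localization identity. From $\sum_j\Psi_{n,j}^2\equiv 1$ one gets $\sum_j\Psi_{n,j}\nabla\Psi_{n,j}=\tfrac12\nabla\bigl(\sum_j\Psi_{n,j}^2\bigr)=0$; expanding $\nabla(\Psi_{n,j}u)=u\,\nabla\Psi_{n,j}+\Psi_{n,j}\nabla u$ and summing over $j$ (a locally finite sum, since the cover $(Q_{n,j})$ is locally finite) then yields, for $u\in\Cci{\R^d}$,
\[
  \norm{\nabla u}^2=\sum_{j\in\Z^d}\norm{\nabla(\Psi_{n,j}u)}^2-\int_{\R^d}\Bigl(\sum_{j\in\Z^d}|\nabla\Psi_{n,j}|^2\Bigr)\,|u|^2\,\dd x .
\]
Next I would estimate the localization error. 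Each point of $\R^d$ belongs to at most $2^d$ of the cubes $Q_{n,j}$, and $\norm{\nabla\Psi_{n,j}}_\infty=\cnought/n$ by \eqref{(3.6)}; hence $\sum_j|\nabla\Psi_{n,j}(x)|^2\le 2^d\cnought^2/n^2$ for all $x$, so the error term is bounded by $(2^d\cnought^2/n^2)\norm{u}^2$.

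Then I would split the main sum according to whether $j\in\JJ(n,t)$. If $j\in\JJ(n,t)$ then $Q_{n,j}\sse F_n\sse\overline{F_n}$, so $Q_{n,j}\cap G_n=\emptyset$; since $u$ is supported in $G_n$ and $\Psi_{n,j}$ in $Q_{n,j}$, it follows that $\Psi_{n,j}u=0$, and these terms drop out. If $j\notin\JJ(n,t)$, then $\Psi_{n,j}u\in\Cci{\Omega\cap Q_{n,j}}$, and since $\la_1(H_{\Omega\cap Q_{n,j}})\ge t$ by the definition of $\JJ(n,t)$, the variational principle gives $\norm{\nabla(\Psi_{n,j}u)}^2\ge t\norm{\Psi_{n,j}u}^2$. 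Summing and using $\sum_j\norm{\Psi_{n,j}u}^2=\int(\sum_j\Psi_{n,j}^2)|u|^2=\norm{u}^2$ (only the terms with $j\notin\JJ(n,t)$ contributing) I obtain
\[
  \norm{\nabla u}^2\ge t\norm{u}^2-\frac{2^d\cnought^2}{n^2}\norm{u}^2=\Bigl(t-\frac{2^d\cnought^2}{n^2}\Bigr)\norm{u}^2 .
\]
Finally, for $n\ge n_0=2^{d/2}\cnought/\sqrt{t-s}$ we have $2^d\cnought^2/n^2\le t-s$, so the coefficient on the right is $\ge s$, which is exactly the asserted bound.

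The argument is essentially bookkeeping once the cover and the partition of unity are in place, so I do not expect a serious obstacle. The two points that need a little care are the justification of the localization identity together with its extension from $\Cci{G_n}$ to the form domain of $H_{G_n}$, and the multiplicity-$2^d$ overlap count for the cover $(Q_{n,j})_j$ — it is this count that produces the factor $2^d$, hence the precise threshold $n_0=2^{d/2}\cnought/\sqrt{t-s}$, in the final estimate.
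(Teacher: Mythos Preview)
Your proof is correct and follows essentially the same approach as the paper: both apply the IMS localization formula to a test function $u\in\Cci{G_n}$, observe that $\Psi_{n,j}u=0$ for $j\in\JJ(n,t)$, use $\la_1(H_{\Omega\cap Q_{n,j}})\ge t$ on the remaining terms, and bound the localization error by $2^d\cnought^2/n^2$ via the overlap count. You simply spell out the derivation of the localization identity and the form-core argument in more detail than the paper does.
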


\begin{proof}
 By the IMS-localization formula \cite[Theorem 3.2]{cy-fr-ki-si} and
 \eqref{(3.6)} we have for any
 $\phi \in C_c^\infty(G_n)$
\begin{align*}
    \scapro{H_{G_n} \phi}{\phi} &= \sum_{j\in \Z^d} \scapro{H_{G_n}
   \Psi_{n,j}\phi}{\Psi_{n,j}\phi} - \int \sum_{j\in\Z^d} |\nabla\Psi_{n,j}|^2
|\phi|^2 \di x \cr
   &\ge t \sum_{j\in\Z^d} \norm{\Psi_{n,j}\phi}^2 - 2^d \frac{\cnought^2} {n^2}
\norm{\phi}^2 \cr
   & = (t -  \frac{2^d \cnought^2}{n^2}) \norm{\phi}^2 \ge s \norm{\phi}^2.
\end{align*}
In the estimate we have used that $\Psi_{n,j}\phi=0$ for $j\in\JJ(n,t)$.
The factor $2^d$ takes into account the fact that at most $2^d$ functions
$\Psi_{n,j}$ can be simultaneously non-zero at any given point $x$.
\end{proof}
\smallskip

We next consider a smoothed version of the
indicator
function of ${\tilde F}_n$, defined as
\begin{align}
   \xi_n := \rho * \indic_{{\tilde F}_n},
   \label{(3.15)}
\end{align}
with $\rho$ defined in Section~\ref{estimates}.
For $n \in \N$ we have
$\supp{\xi_n} \sse  {\tilde{\tilde{F}}}_n$ and
${\xi_n}(x) = 1$ for $x \in {F}_n$.
Furthermore, $0 \le {\xi_n}(x) \le 1$ and
$\|\nabla {\xi_n}(x)\|_\infty \le C$,
$\|\Delta {\xi_n}(x)\|_\infty \le C$
 for some
constant $C \ge 0$ which is independent of $n$ and $\Omega$. Also,
 \[
\spt\nabla\xi_n \sse \{x\in\R^d\mid \dist(x,\partial\tilde F_n)<1/2\}.
\]

It will be convenient to cover the support of $\nabla\xi_n$
by (non-overlapping) cubes of side length $1$, given by
\begin{align}
   {\check Q}_0 := (-1/2,1/2]^d, \quad  {\check Q}_\ell :=  {\check Q}_0 +
   \ell \qquad (\ell \in \Z^d),
    \label{(3.16)}
\end{align}
and we will write  ${{\check\chi}_\ell} := \indic_{{\check
Q}_\ell}$.

We then let
\begin{align}
  Z_n :=  \Z^d \cap \partial{\tilde F}_n.
   \label{(3.17)}
\end{align}
Note that this implies $\supp \nabla\xi_n \sse \bigcup_{\ell \in
Z_n} {\check Q}_\ell$.  We then have
\begin{align}
  |Z_n| 
\le |\JJ(n,t)| \, |\Z^d\cap\partial\tilde Q_{n,0}|
\le C \, n^{d-1} \, \NN_t(H_\Omega),
    \label{(3.19)}
\end{align}
by Lemma \ref{3.1}, with $C=2^{ 3d-2}\,3^d$. We furthermore let
\begin{align*}
  Y_n := \{ j \in \Z^d \mid {\check Q}_j \cap (\Omega \setminus {\tilde{\tilde
  F}}_n) \ne \emptyset \},
\end{align*}
so that $\Omega \setminus {\tilde{\tilde  F}}_n \sse \bigcup_{j \in Y_n}
{\check Q}_j$.

 We now quantify the exponential decay of ${\Phi}$ as  we move away from
 the set ${\tilde{F}}_n$.

\begin{lemma} \label{3.3}
There exists a constant $C \ge 0$ with the following property.
If $1 \le r < t < \inf\sigmaess(H_\Omega)$, if $n \ge n_0$ (with $n_0$
from Lemma~\ref{3.2}), and if ${\xi_n}$ from \eqref{(3.15)}, then
 \begin{align*}
  \norm{ {\check\chi}_j (1-{\xi_n}) {\Phi}}_1
     \le C \frac{\sqrt{r} }{t-r} \sum_{\ell \in Z_n} \ee^{-\alpha |j -
  \ell|}  \qquad (j \in \Z^d)
 \end{align*}
(with $\alpha$ from \eqref{(2.7)}), for all normalized
eigenfunctions ${\Phi}$ associated with an eigenvalue $\la_k \in
[1,r]$.
\end{lemma}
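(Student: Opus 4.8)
The plan is to prove a weighted $L_2$ (Agmon-type) exponential decay estimate for $(1-\xi_n)\Phi$ and then convert it to the pointwise-cube $L_1$ bound using the $L_\infty$-bound of Theorem~\ref{2.12} together with the geometry of $Z_n$. First I would fix a normalized eigenfunction $\Phi$ with $H_\Omega\Phi=\la_k\Phi$, $\la_k\in[1,r]$. The key point is that on $G_n=\Omega\setminus\overline{F_n}$ the Dirichlet Laplacian has spectrum bounded below by $s$ for a suitable $s>r$: taking $s:=(r+t)/2$, so that $t-s=(t-r)/2=\beta$, the hypothesis $n\ge n_0=2^{d/2}\cnought/\sqrt{t-s}$ is (up to the value of $\cnought$) exactly the hypothesis of Lemma~\ref{3.2}, which then gives $\inf\sigma(H_{G_n})\ge s=r+\beta\ge\la_k+\beta$. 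This spectral gap on $G_n$ is what drives the decay of $\Phi$ away from $F_n$.

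Next I would introduce the Agmon weight. Let $d_n(x):=\dist(x,\tilde F_n)$ and set $g_\mu(x):=\mu\, d_n(x)$, truncated so that it is bounded and Lipschitz with $|\nabla g_\mu|\le\mu$ a.e.; here $\mu>0$ is a parameter to be chosen of the order of $\sqrt\beta/(\sqrt{\md}\sqrt r)$, i.e.\ comparable to $\alpha$ up to constants. Write $\Phi=\xi_n\Phi+(1-\xi_n)\Phi$. On the region $\{1-\xi_n\ne0\}$ one has $d_n$ supported correctly: since $\xi_n=1$ on $F_n$ and $\supp(1-\xi_n)\cap\Omega\subseteq\Omega\setminus F_n$, the function $(1-\xi_n)\Phi$ lives on $G_n$ modulo the thin collar $\tilde F_n\setminus F_n$ where $\nabla\xi_n$ is supported. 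The standard Agmon/Combes--Thomas computation: test the eigenvalue equation against $e^{2g_\mu}(1-\xi_n)^2\Phi$ (or equivalently expand $\|\nabla(e^{g_\mu}(1-\xi_n)\Phi)\|^2$), use the quadratic-form lower bound $\inf\sigma(H_{G_n})\ge\la_k+\beta$ to absorb the ``$H$'' term, and note that the $|\nabla g_\mu|^2\le\mu^2$ contribution is controlled by choosing $\mu^2\le\beta/2$, say. The commutator terms involving $\nabla\xi_n$ produce an error supported in the collar $\{\dist(\cdot,\partial\tilde F_n)<1/2\}$, where $d_n\le 1$ so $e^{2g_\mu}\le e^{2\mu}\le C$, and where $\|\nabla\xi_n\|_\infty,\|\Delta\xi_n\|_\infty\le C$; this is why the sets $\tilde F_n$ and $\tilde{\tilde F}_n$ were introduced, and it keeps the error term $O(1)$ times $\|\Phi\|_2^2=1$ rather than something growing in $n$. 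The outcome is an estimate of the schematic form
\[
\int_\Omega e^{2\mu d_n(x)}\bigl|(1-\xi_n(x))\Phi(x)\bigr|^2\,dx \le \frac{C}{\beta}\cdot\frac{1}{\beta}\quad\text{(times }\|\Phi\|_2^2\text{)},
\]
where one factor $1/\beta$ is the gap used to absorb, and a further factor involving $r$ enters because bounding the collar error also uses $\|\nabla\Phi\|_2^2\le\la_k\le r$ after integration by parts; being careful with constants I expect the right-hand side to come out as $C\,\frac{r}{(t-r)^2}$ (consistent with the $\sqrt r/(t-r)$ in the statement once one takes square roots). Choosing $\mu$ a fixed multiple of $\alpha$ — recalling $\alpha=\min\{\beta,1\}/(16\md r)$ and $\beta=(t-r)/2$, so $\mu\asymp\alpha$ is admissible since $\alpha^2\le C\beta/r\le C\beta$ — gives the weighted $L_2$ bound with weight $e^{2\alpha d_n}$.

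Finally I would localize to cubes. For fixed $j\in\Z^d$, if $\check Q_j\cap(\Omega\setminus\tilde{\tilde F}_n)\ne\emptyset$ then every $x\in\check Q_j$ satisfies $d_n(x)\ge\dist(\check Q_j,\tilde F_n)-\sqrt d\ge c\,\dist(j,Z_n)-C$, where $Z_n=\Z^d\cap\partial\tilde F_n$; combining with the weighted $L_2$ bound and Cauchy--Schwarz on the unit cube $\check Q_j$ (and the $L_\infty$-bound $\|\Phi\|_\infty\le C\la_k^{d/4}\le C r^{d/4}$ from Theorem~\ref{2.12} to control $\|\Phi\|_{L_2(\check Q_j)}$ crudely — actually Cauchy--Schwarz against the weighted bound is cleaner and avoids an $r$-power) yields $\|\check\chi_j(1-\xi_n)\Phi\|_1\le C\frac{\sqrt r}{t-r}e^{-\alpha\,\dist(j,Z_n)}$. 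The last step is the elementary observation $e^{-\alpha\,\dist(j,Z_n)}\le\sum_{\ell\in Z_n}e^{-\alpha|j-\ell|}$ (indeed the left side equals $\max_\ell e^{-\alpha|j-\ell|}$ when $Z_n\ne\emptyset$), which gives the stated form. For $j$ with $\check Q_j\subseteq\tilde{\tilde F}_n$ the same bound holds trivially since then $\dist(j,Z_n)$ may be small but the $L_1$-norm over a unit cube is still $\le C\sqrt r/(t-r)$ by Cauchy--Schwarz against the (unweighted) $L_2$ bound $\|\Phi\|_2=1$, absorbed into the $\ell=$nearest-point term.

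The main obstacle I anticipate is the bookkeeping of the collar error terms in the Agmon integration by parts: one must check that all the terms produced by moving $e^{g_\mu}(1-\xi_n)$ past $-\Delta$ — namely those with $\nabla\xi_n$, $\Delta\xi_n$, and the cross terms $\nabla\xi_n\cdot\nabla g_\mu$ — are supported only in $\{\dist(\cdot,\partial\tilde F_n)<1/2\}$, where $e^{2g_\mu}$ is bounded by an $n$-independent constant, and that no term secretly grows with $n$ or fails to be controlled by $\|\Phi\|_2$ and $\|\nabla\Phi\|_2^2\le r$. Getting the precise power of $r$ and the precise power of $(t-r)$ right (as opposed to merely ``some negative power'') is the delicate part; everything else is a routine Combes--Thomas/Agmon argument.
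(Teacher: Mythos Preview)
Your approach is correct and is a genuine alternative to the paper's proof. Both arguments start from the same two ingredients: the spectral gap $\inf\sigma(H_{G_n})\ge s=(r+t)/2$ from Lemma~\ref{3.2}, and the commutator identity $(H_{G_n}-\la_k)\bigl((1-\xi_n)\Phi\bigr)=2\nabla\xi_n\cdot\nabla\Phi+(\Delta\xi_n)\Phi=:\eta$, with $\supp\eta\sse\bigcup_{\ell\in Z_n}\check Q_\ell$ and $\|\eta\|_2\le C\sqrt r$. The difference is in how the exponential decay is extracted. The paper uses the \emph{Combes--Thomas/boosting} method on the resolvent: it conjugates $H_{G_n}$ by $\ee^{\alpha f}$ for smoothed linear functions $f$ to obtain the operator bound $\|\check\chi_j(H_{G_n}-\la)^{-1}\check\chi_\ell\|\le\frac{C}{s-r}\ee^{-\alpha|j-\ell|}$ (Lemma~\ref{A.2}), and then writes $(1-\xi_n)\Phi=(H_{G_n}-\la_k)^{-1}\eta=\sum_{\ell\in Z_n}(H_{G_n}-\la_k)^{-1}\check\chi_\ell\eta$, so that the sum over $Z_n$ appears directly. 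Your Agmon weighted energy method instead produces a \emph{global} bound $\|\ee^{\alpha d_n}(1-\xi_n)\Phi\|_2\le C\sqrt r/(t-r)$, from which you then localize to each $\check Q_j$.

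What each buys: the paper's resolvent route yields the sum $\sum_{\ell\in Z_n}\ee^{-\alpha|j-\ell|}$ for free from the source decomposition, with no geometric afterthought. Your route is arguably more elementary (one integration by parts with a Lipschitz weight, no operator-theoretic perturbation lemma), but it requires the extra geometric step of matching $d_n(j)=\dist(j,\tilde F_n)$ to $\dist(j,Z_n)$. That step is fine --- since $\partial\tilde F_n$ lies in axis-aligned hyperplanes with integer coordinates, every point of $\partial\tilde F_n$ is within $O(\sqrt d)$ of a lattice point of $\partial\tilde F_n$, so $d_n(j)\ge\dist(j,Z_n)-C$ with $C$ depending only on $d$ --- but you should state it explicitly rather than writing ``$c\,\dist(j,Z_n)$'' with an unspecified $c$: you need $c=1$ (with an additive loss) to recover the rate $\ee^{-\alpha|j-\ell|}$ rather than $\ee^{-c\alpha|j-\ell|}$. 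Also, the $L_\infty$-bound from Theorem~\ref{2.12} is not needed anywhere; Cauchy--Schwarz on the unit cube against the weighted $L_2$ bound is exactly right, as you note.
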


We defer the proof of Lemma \ref{3.3} to Section~\ref{exp-dec-est}.

\begin{proof}[Proof of Proposition~\ref{2.2}]

First we treat the case where $n\in\N$, starting from 
estimate
\begin{align}
   \norm{{{\Phi}}}_1  = \int_{\Omega \cap {\tilde{\tilde{F}}}_n} |{{\Phi}}(x)|
\di x
     + \int_{\Omega \setminus  {\tilde{\tilde{F}}}_n} |{{\Phi}}(x)| \di x
   =: I_{n,1} + I_{n,2}.
   \label{(3.23)}
\end{align}
By the Schwarz inequality and Lemma \ref{3.1}, the first term on the right hand
side of \eqref{(3.23)}
can be estimated as follows:
\begin{align}
    I_{n,1} \le \vol_d \bigl({\tilde{\tilde{F}}}_n\bigr) ^{1/2}
\norm{{{\Phi}}}_2
    \le |\JJ(n,t)|^{1/2} (6n)^{d/2}  \le 3^d\,2^{d/2}\, n^{d/2}
\sqrt{{\NN}_t(H_\Omega)}.
\end{align}

As for the second term on the right hand side of \eqref{(3.23)},
we note that for $j \in Y_n$ and $\ell \in Z_n$ we have
\begin{align*}
    |j - \ell| \ge n - 1.
\end{align*}
 It now follows by Lemma \ref{3.3} that
\begin{align*}
    I_{n,2}
    &  \leq \sum_{j \in Y_n} \norm{{\check \chi}_j (1-{\xi_n}) {{\Phi}}}_1
     \le C  \frac{\sqrt{r}}{t-r} \sum_{j \in Y_n} \sum_{\ell \in Z_n}
\ee^{-\alpha
                     |j-\ell|} \cr
    & = C  \frac{\sqrt{r} }{t-r} \sum_{\ell \in Z_n} \sum_{j \in Y_n}
\ee^{-\alpha
                     |j-\ell|}
        \le  C \frac{\sqrt{r}}{t-r} \cdot \bigl(\sup_{\ell\in Z_n}
M(n,\ell)\bigr) \cdot
          |Z_n|,
\end{align*}
where $M(n,\ell) :=  \sum_{j \in Y_n} \ee^{-\alpha |j-\ell|}
$ for $\ell \in
Z_n$.
  Here $|Z_n| \le C n^{d-1} \, {\NN}_t(H_\Omega)$ by \eqref{(3.19)}, and
\begin{align*}   
M(n,\ell) 
&\le \sum_{j\in\Z^d,\ |j|\ge n-1}\ee^{-\alpha|j|}
\le 
\ee^{\alpha\sqrt d}\int_{\{\xi \in \R^d; |\xi| \ge n - 1 \}} 
\ee^{-\alpha |\xi|} \di\xi\\
    &\le C \left(\frac{n^{d-1}}{\alpha} + \frac 1 {\alpha^d} \right)
\ee^{-\alpha
n},
\end{align*}
for all $n \in \N$ and $\ell \in Z_n$. We therefore obtain the estimate
\begin{align}
   I_{n,2}  \le C  \frac{\sqrt r}{t-r}\, n^{d-1} \,
    {\NN}_t(H_\Omega) \left( \frac{n^{d-1}}\alpha + \frac1{\alpha^d} \right)
\ee^{-\alpha n}.
\end{align}
Now (1.3) follows from (2.8) and (2.9).

Finally, we reduce the case of non-integer $n$ to the case treated above. If
$n\in(0,\infty)$ satisfies the required inequality, then $\tilde n:=\lceil
n\rceil$ (the smallest integer $\ge n$) belongs to $\N$, and the
asserted inequality holds for $n$ replaced with $\tilde n$. Readjusting the
constant $C$, one then obtains the estimate with $n$.
\end{proof}

\section{Exponential decay estimates}
\label{exp-dec-est}

This section is devoted to quantitative exponential decay
estimates and a proof of Lemma \ref{3.3}. We use the method of
\emph{boosting}, a well-known tool in the study of eigenfunctions
of Schr\"odinger operators (cf.\ \cite[p.\,37]{his-sig} for a
survey of the literature). Here the operator is sandwiched between
$\ee^{\gamma \cdot x}$ and $\ee^{-\gamma \cdot x}$ for $\gamma \in
\R^d$. This method dates from the eighties and yields exponential
decay in the $L_2$-sense. Below, we follow to some extent the
proof of \cite[Lemma 6]{dei-hem}. To keep technicalities as simple
as possible we will not work with $\ee^{\pm\gamma \cdot x}$ but
with smoothed cut-offs of these functions.

We first consider general real-valued functions $f \in C^\infty(\R^d)$ with
$f$, $\nabla f$ and $\Delta f$ bounded; only
later on we will specify $f$ to coincide with $\gamma \cdot (x - k)$ on a large
ball.
 Let $G \sse \R^d$ open. By \cite[Theorem~VI-2.1]{kat}, it
is then easy to see that
$fu \in D(H_G)$ for all $u \in D(H_G)$; furthermore,
\begin{align*}
  \ee^{-f} H_G \ee^{f}
    = H_G - 2 \nabla f \cdot \nabla - \Delta f - |\nabla f|^2.
\end{align*}
Here we note that the perturbation $ - 2 \nabla f \cdot \nabla - \Delta f -
|\nabla f|^2$
has relative form-bound zero with respect to $H_G$ on $H_0^1(G)$.
We let $H_{G,f}$ denote the (unique) m-sectorial closed operator associated
with $H_G -  2 \nabla f \cdot \nabla - \Delta f - |\nabla f|^2$
by \cite[Theorem~VI-3.4 or 3.9]{kat}. Also, using
\cite[Theorem~VI-2.1(iii)]{kat}, one can easily see that
$D(H_{G,f}) =  D(H_G)$.
From the inequalities
\begin{align}
\norm{\nabla \phi}_2^2  = h_G[\phi, \phi] =  \scapro{H_G\phi}{\phi}
  \le \frac{\delta^2 } 2 \norm{H_G\phi}_2^2 +  \frac 1 {2\delta^2}
  \norm{\phi}_2^2,
   \label{(A.2)}
\end{align}
valid for all $\phi \in D(H_G)$ and all $\delta > 0$, we see that
$H_{G,f} - H_G$ is relatively bounded with respect to $H_G$ in the operator
sense, with relative bound zero.

\begin{lemma} \label{A.1}
Let $G \sse \R^d$ be open and let $H_G$, the Dirichlet
Laplacian of $G$, be such that $\sigma_0 := \inf\sigma(H_G) > 1$. For
 $1 \le r < s  < \sigma_0$ and $m \ge 1$ define
\begin{align}
   \alpha := \frac{\min \{s-r, 1\}}{16 m r}.
   \label{(A.3)}
\end{align}
Let $f \in C^\infty(\R^d; \R)$ be bounded with
$\|\nabla f\|_\infty \le m$ and $\|\Delta f\|_\infty \le m$.

We then have $ [1,r]
\sse
\varrho(H_{G,\alpha f})$,
and
\begin{align*}
   \norm{(H_{G, \alpha f} - \la)^{-1}}
    \le \frac 2 {s - r},
\end{align*}
for all $\la \in [1,r]$. Furthermore, for the same $\la$, one has
\begin{align*}
  ( H_{G, \alpha f}  - \la)^{-1} = \ee^{-\alpha f} (H_G - \la)^{-1} \ee^{\alpha
f}.
\end{align*}
\end{lemma}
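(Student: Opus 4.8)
The plan is to treat $H_{G,\alpha f}$ as a relatively bounded perturbation of $H_G$. Write $W:=-2\alpha\,\nabla f\cdot\nabla-\alpha\,\Delta f-\alpha^2|\nabla f|^2$, which is exactly $H_{G,\alpha f}-H_G$ on their common domain $D(H_{G,\alpha f})=D(H_G)$. For $\la\in[1,r]$ one has $\la<s<\sigma_0=\inf\sigma(H_G)$, hence $\la\in\varrho(H_G)$ and $\norm{(H_G-\la)^{-1}}\le(\sigma_0-\la)^{-1}$, and since $(H_G-\la)^{-1}$ maps $L_2(G)$ onto $D(H_G)$ we may factor
\begin{align*}
  H_{G,\alpha f}-\la=\bigl(I+W(H_G-\la)^{-1}\bigr)(H_G-\la).
\end{align*}
It then suffices to show $\norm{W(H_G-\la)^{-1}}\le\tfrac12$: by the Neumann series $I+W(H_G-\la)^{-1}$ is boundedly invertible on $L_2(G)$, so $H_{G,\alpha f}-\la$ is boundedly invertible, i.e.\ $\la\in\varrho(H_{G,\alpha f})$, and
\begin{align*}
  \norm{(H_{G,\alpha f}-\la)^{-1}}\le\frac{1}{1-1/2}\,(\sigma_0-\la)^{-1}=\frac{2}{\sigma_0-\la}\le\frac{2}{\sigma_0-r}<\frac{2}{s-r}.
\end{align*}

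For the core estimate I would fix $u\in L_2(G)$ and put $v:=(H_G-\la)^{-1}u\in D(H_G)$. Then $\norm v_2\le(\sigma_0-\la)^{-1}\norm u_2$, while from $\norm{\nabla v}_2^2=\scapro{H_Gv}{v}=\scapro uv+\la\norm v_2^2$ (equivalently, from the spectral theorem) one gets $\norm{\nabla v}_2\le\tfrac{\sqrt{\sigma_0}}{\sigma_0-\la}\norm u_2$. Using $\norm{\nabla f}_\infty,\norm{\Delta f}_\infty\le m$,
\begin{align*}
  \norm{Wv}_2\le2\alpha m\,\norm{\nabla v}_2+(\alpha m+\alpha^2m^2)\norm v_2\le\Bigl(2\alpha m\tfrac{\sqrt{\sigma_0}}{\sigma_0-\la}+\tfrac{\alpha m+\alpha^2m^2}{\sigma_0-\la}\Bigr)\norm u_2 .
\end{align*}
The decisive observation is that $x\mapsto\sqrt x/(x-r)$ is decreasing on $(r,\infty)$, so $\tfrac{\sqrt{\sigma_0}}{\sigma_0-\la}\le\tfrac{\sqrt{\sigma_0}}{\sigma_0-r}\le\tfrac{\sqrt s}{s-r}$, and the same monotonicity bounds $\tfrac{\sqrt s}{s-r}$ by $\sqrt{r+1}$ when $s-r\ge1$. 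Since $\alpha m=\tfrac{\min\{s-r,1\}}{16r}$ and $r\ge1$ (so $s\le 2r$ when $s-r\le1$, and $r+1\le2r$ always), a short case distinction gives $2\alpha m\tfrac{\sqrt s}{s-r}\le\tfrac{\sqrt2}{8}$, while $\tfrac{\alpha m}{\sigma_0-\la}\le\tfrac{\alpha m}{s-r}\le\tfrac1{16}$ and $\tfrac{\alpha^2m^2}{\sigma_0-\la}\le\tfrac1{256}$. Summing, $\norm{W(H_G-\la)^{-1}}\le\tfrac{\sqrt2}{8}+\tfrac1{16}+\tfrac1{256}<\tfrac12$, as needed; the precise value of $\alpha$ in \eqref{(A.3)} is tailored exactly so that this sum stays below $\tfrac12$.

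It remains to identify the resolvent (the last displayed assertion). Because $f$, hence $\alpha f$, is bounded with bounded gradient and Laplacian, multiplication by $\ee^{\pm\alpha f}$ is a bounded, boundedly invertible operator on $L_2(G)$, the map $\ee^{\alpha f}$ carries $D(H_G)$ bijectively onto $D(H_G)$, and the commutation identity recalled before the lemma (with $f$ replaced by $\alpha f$) reads $\ee^{-\alpha f}H_G\ee^{\alpha f}=H_G-2\alpha\nabla f\cdot\nabla-\alpha\Delta f-\alpha^2|\nabla f|^2=H_{G,\alpha f}$, both sides having domain $D(H_G)$. Hence for $\la\in[1,r]\subseteq\varrho(H_G)$,
\begin{align*}
  H_{G,\alpha f}-\la=\ee^{-\alpha f}(H_G-\la)\ee^{\alpha f}
\end{align*}
is invertible with $(H_{G,\alpha f}-\la)^{-1}=\ee^{-\alpha f}(H_G-\la)^{-1}\ee^{\alpha f}$, which also re-proves $\la\in\varrho(H_{G,\alpha f})$. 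One cannot read the norm bound off this similarity, since $\norm{\ee^{\pm\alpha f}}$ depends on $\norm f_\infty$; that is precisely why the perturbative estimate is needed for the bound $2/(s-r)$.

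The step I expect to be the main obstacle is controlling the first-order term $\alpha\,\nabla f\cdot\nabla(H_G-\la)^{-1}$: its naive norm $\sqrt{\sigma_0}/(\sigma_0-\la)$ grows with $\sigma_0$, and only the monotonicity trick — replacing $\sigma_0$ by $s$ — together with the factor $1/r$ built into $\alpha$ (which absorbs the resulting factor $\sqrt s$, of size $\lesssim\sqrt{2r}$) keeps everything below $\tfrac12$. Once that is arranged, the remaining constant-chasing is routine.
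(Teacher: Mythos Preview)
Your argument is correct and follows essentially the same route as the paper: a Neumann-series/Kato stability argument showing that the perturbation $W$ satisfies $\norm{W(H_G-\la)^{-1}}\le\tfrac12$, whence $\la\in\varrho(H_{G,\alpha f})$ and $\norm{(H_{G,\alpha f}-\la)^{-1}}\le 2\norm{(H_G-\la)^{-1}}\le 2/(s-r)$, with the similarity identity verified directly. The only notable difference is in bounding the first-order term: the paper invokes the $a,b$-type relative bound coming from \eqref{(A.2)} with $\delta=1$ (so that $2\norm{\nabla f\cdot\nabla\phi}\le\sqrt2\,m\norm{(H_G-\la)\phi}+\sqrt2\,m(\la+1)\norm\phi$) and then appeals to \cite[Theorem~IV-1.16]{kat}, whereas you use the spectral estimate $\norm{\nabla(H_G-\la)^{-1}}\le\sqrt{\sigma_0}/(\sigma_0-\la)$ together with the monotonicity of $x\mapsto\sqrt{x}/(x-r)$ to replace $\sigma_0$ by $s$; both lead to the same numerical conclusion and your version makes the constant-chasing fully explicit.
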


\begin{proof}
We are going to apply \cite[Theorem IV-1.16]{kat} to $T := H_G - \la $ and
 $S := H_{G,\alpha f} - \la$. In estimating the term containing $\nabla f \cdot
\nabla$
we use \eqref{(A.2)} with $\delta := 1$ so that
\begin{align*}
  2\norm{\nabla f \cdot \nabla \phi}_2 \le \sqrt2 m \norm{(H_G - \la) \phi}_2
    + \sqrt2 m (\la + 1) \norm{\phi}_2.
\end{align*}
It is not difficult to
see that the numbers $a := \sqrt{2} m |\alpha| (2 + \la) + |\alpha|^2m^2$  and
 $b := \sqrt{2} |\alpha|m$ satisfy the condition
 $a \norm{(H_G - \la)^{-1}} + b \le 1/2$.
 We thus see that any $\la \in [1, r]$  belongs to the resolvent set of
$H_{G,\alpha f}$; furthermore, \cite[eqation~IV-(1.31)]{kat} yields the
estimate
$\norm{( H_{G,\alpha f} - \la)^{-1}} \le \frac2{s - r}$.
Direct computation shows that $ \ee^{-\alpha f} (H_G - \la)^{-1} \ee^{\alpha f}$
is the inverse of $H_{G,\alpha f} - \la$.
\end{proof}
\smallskip

For the application of Lemma \ref{A.1} we need to construct specific
functions~$f$. Consider first
\begin{align*}
    \phi_{k, \ell}(x) := \frac 1{|\ell - k|} \scapro{x - k}{\ell - k}
    \qquad (x\in\R^d,\ k,\ell \in \Z^d,\ k \ne \ell),
\end{align*}
so that $ {\phi}_{k, \ell}(k) = 0$ and $ {\phi}_{k, \ell}(\ell) = |\ell -
k|$.  We next take, for $R \ge 1$,
\begin{align*}
    f_{R, k, \ell} :=
\rho_R
* \left((\phi_{k,\ell} \wedge R) \vee
    (-R)\right),
\end{align*}
with $\rho$ defined in Section~\ref{estimates} and
$\rho_R:=\frac1{R^d}\rho\bigl(\frac\cdot R\bigr)$; recall that
$\spt\rho\sse B(0,1/2)$.
We then have $\|\nabla f_{R,k,\ell}\|_\infty \le 1$
and  $\|\Delta f_{R,k,\ell}\|_\infty \le \frac1R\|\Delta \rho\|_1$.
\begin{lemma}\label{A.2}
Let $G \sse \R^d$ be an open set with $\sigma_0
:= \inf\sigma(H_G) > 1$, and
let $1 \le r < s < \sigma_0$. Let
\begin{align*}
   \alpha := \frac{\min \{ s - r, 1\}} {16 \md r},
\end{align*}
with $\md$ from \eqref{md-def}. Finally,
let ${\check Q}_k$ as in \eqref{(3.16)}, and ${\check\chi}_k := \indic_{{\check
Q}_k}$ for $k \in \Z^d$.

Then there exists a constant $C \ge 0$, depending only on $d$, such that
\begin{align*}
   \norm{{\check\chi}_k (H_G - \la)^{-1} {{\check\chi}_\ell}} \le \frac C{s-r}
\ee^{-\alpha |k - \ell|} \qquad
   (k, \ell \in \Z^d),
\end{align*}
for any $\la \in [1,r]$.
\end{lemma}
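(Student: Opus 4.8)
\textbf{Plan of proof for Lemma \ref{A.2}.}

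The plan is to exploit the explicit cut-off functions $f_{R,k,\ell}$ constructed just before the statement, together with the weighted resolvent identity from Lemma \ref{A.1}. Fix $k,\ell\in\Z^d$ and $\la\in[1,r]$. If $k=\ell$ the bound is immediate from $\norm{(H_G-\la)^{-1}}\le 2/(s-r)$, obtained as a special case of Lemma \ref{A.1} (with any admissible $f$, e.g.\ $f=0$), since then $\ee^{-\alpha|k-\ell|}=1$. So assume $k\ne\ell$ and set $R:=|k-\ell|\ge1$. Apply Lemma \ref{A.1} with $m:=\md$ and $f:=f_{R,k,\ell}$; the hypotheses $\norm{\nabla f}_\infty\le 1\le\md$ and $\norm{\Delta f}_\infty\le\frac1R\norm{\Delta\rho}_1\le\md$ hold by the estimates recorded for $f_{R,k,\ell}$ and the definition \eqref{md-def} of $\md$ (note $R\ge1$). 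This gives
\begin{align*}
   (H_G-\la)^{-1} = \ee^{\alpha f}\,(H_{G,\alpha f}-\la)^{-1}\,\ee^{-\alpha f},
   \qquad \norm{(H_{G,\alpha f}-\la)^{-1}}\le\frac2{s-r}.
\end{align*}
Multiplying on the left by ${\check\chi}_k$ and on the right by ${\check\chi}_\ell$, and using that ${\check Q}_k$ and ${\check Q}_\ell$ have diameter $\sqrt d$,
\begin{align*}
   \norm{{\check\chi}_k(H_G-\la)^{-1}{\check\chi}_\ell}
   \le \Bigl(\sup_{{\check Q}_k}\ee^{\alpha f}\Bigr)
       \Bigl(\sup_{{\check Q}_\ell}\ee^{-\alpha f}\Bigr)
       \norm{(H_{G,\alpha f}-\la)^{-1}}.
\end{align*}

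The next step is the geometry: I need to show $\sup_{{\check Q}_k}f_{R,k,\ell}\le C'$ and $\sup_{{\check Q}_\ell}(-f_{R,k,\ell})\le -|k-\ell|+C'$ for some dimensional constant $C'$, so that the product of suprema is bounded by $C''\,\ee^{-\alpha|k-\ell|}$. Since $f_{R,k,\ell}=\rho_R*g$ with $g=(\phi_{k,\ell}\wedge R)\vee(-R)$ and $\rho_R$ supported in $B(0,R/2)$ with integral $1$, we have $\inf_{B(x,R/2)}g\le f_{R,k,\ell}(x)\le\sup_{B(x,R/2)}g\le\sup g$. On ${\check Q}_k$, $|x-k|\le\sqrt d/2$, while $\phi_{k,\ell}$ is $1$-Lipschitz with $\phi_{k,\ell}(k)=0$, so $g\le\phi_{k,\ell}\le\sqrt d/2$ there, hence $f_{R,k,\ell}\le\sqrt d/2$ on ${\check Q}_k$. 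On ${\check Q}_\ell$: if $R=|k-\ell|\ge\sqrt d$ then for $x\in{\check Q}_\ell$ and $y\in B(x,R/2)$ one has $\phi_{k,\ell}(y)\ge|k-\ell|-|y-\ell|\ge R-R/2-\sqrt d/2=R/2-\sqrt d/2\ge -R$, but more to the point $f_{R,k,\ell}(x)\ge\inf_{B(x,R/2)}\phi_{k,\ell}\ge|k-\ell|-\sqrt d/2-R/2$; combining the two bounds the product of the suprema of $\ee^{\alpha f}$ over ${\check Q}_k$ and $\ee^{-\alpha f}$ over ${\check Q}_\ell$ is at most $\exp\!\bigl(\alpha(\sqrt d/2+\sqrt d/2+R/2-|k-\ell|)\bigr)=\exp\!\bigl(\alpha(\sqrt d-R/2)\bigr)$, which is $\le\ee^{\alpha\sqrt d}\ee^{-\alpha|k-\ell|/2}$; absorbing $\ee^{\alpha\sqrt d}\le\ee^{\sqrt d}$ (as $\alpha\le1$ by \eqref{(A.3)} since $s-r<\sigma_0$ can be taken $\le$ whatever, and $16\md r\ge1$) into $C$, and noting we only lose a factor $2$ in the exponent (replace $\alpha$ by $\alpha/2$ or re-run Lemma \ref{A.1} with a slightly smaller constant — here it is cleaner to observe that the same argument with $\phi_{k,\ell}$ directly, without the truncation at $R$, already gives $f_{R,k,\ell}(x)\ge|k-\ell|-|x-\ell|-R/2$ is not sharp enough, so one truncates the convolution radius to $1$ instead, i.e.\ really one should use $\rho_1$, giving radius $1/2$ and the clean bound $f(x)\ge|k-\ell|-\sqrt d/2-1/2$ on ${\check Q}_\ell$). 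For the bounded-$R$ range $1\le R<\sqrt d$ (finitely many dimensional cases, or just: $\ee^{-\alpha|k-\ell|}\ge\ee^{-\alpha\sqrt d}\ge\ee^{-\sqrt d}$ is bounded below by a constant) the claimed inequality is trivial after enlarging $C$.

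\textbf{Main obstacle.} The only real subtlety is the bookkeeping in the geometric step: one must choose the mollification scale in $f_{R,k,\ell}$ (the paper writes $\rho_R$, but in fact taking a \emph{fixed}-radius mollifier $\rho_1$, or equivalently noting $\spt\rho_R\sse B(0,R/2)$ is wasteful) so that $f$ still satisfies $\norm{\Delta f}_\infty\le\md$ \emph{and} the difference $\sup_{{\check Q}_k}f-\inf_{{\check Q}_\ell}f$ is at least $|k-\ell|-C'$, not merely $|k-\ell|/2-C'$. Using $\rho_R$ with support radius $R/2$ forces one to work with $f/2$ or to accept $\ee^{-\alpha|k-\ell|/2}$ and then rename; the slickest route, and the one I expect the paper takes, is to keep the mollifier at a fixed unit scale so that $\norm{\Delta f}_\infty\le\norm{\Delta\rho}_1\le\md$ holds outright and the cut-off of $\phi_{k,\ell}$ at levels $\pm R$ guarantees $f$ is bounded; then $f\le\sqrt d/2$ on ${\check Q}_k$ and $f\ge R-\sqrt d/2-1/2$ on ${\check Q}_\ell$ (valid once $R\ge\sqrt d+1$, with the few remaining $k,\ell$ handled by enlarging $C$), yielding the product of suprema $\le\ee^{\sqrt d+1}\ee^{-\alpha|k-\ell|}$ since $\alpha\le1$, and multiplying by $2/(s-r)$ finishes the proof.
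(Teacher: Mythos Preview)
Your strategy coincides with the paper's: apply Lemma~\ref{A.1} with $m=\md$ and weight $f_{R,k,\ell}$, then bound the product $\|\check\chi_k\,\ee^{\alpha f}\|_\infty\cdot\|\ee^{-\alpha f}\,\check\chi_\ell\|_\infty$. But your first pass has a genuine error. You record $f_{R,k,\ell}(x)\le\sup_{B(x,R/2)}g$ and then assert that since $g\le\phi_{k,\ell}\le\sqrt d/2$ \emph{on} $\check Q_k$ one gets $f_{R,k,\ell}\le\sqrt d/2$ on $\check Q_k$. That is a non sequitur: the convolution averages $g$ over $B(x,R/2)$, which for $R=|k-\ell|$ extends far beyond $\check Q_k$; the honest crude bound is only $\sup_{\check Q_k}f_{R,k,\ell}\le\sqrt d/2+R/2$. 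Pairing this with your (correct) lower bound $\inf_{\check Q_\ell}f_{R,k,\ell}\ge|k-\ell|-\sqrt d/2-R/2$ yields a product $\le\ee^{\alpha\sqrt d}$ with \emph{no} decay at all, not the $\ee^{-\alpha|k-\ell|/2}$ you display. And $\ee^{-\alpha|k-\ell|/2}$ would not establish the lemma anyway: the exponent $\alpha$ is prescribed by the statement and cannot be halved by enlarging $C$.

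The remedy you propose at the end --- keep the truncation of $\phi_{k,\ell}$ at $\pm R$ for boundedness but mollify at \emph{unit} scale, convolving with $\rho$ itself rather than $\rho_R$ --- is correct and should be the argument from the start, not an afterthought. Then $\|\nabla f\|_\infty\le1$ and $\|\Delta f\|_\infty\le\|\Delta\rho\|_1\le\md$, so Lemma~\ref{A.1} applies with the stated $\alpha$; the blurring is only by $1/2$, giving $\sup_{\check Q_k}f\le\sqrt d/2+1/2$ and $\inf_{\check Q_\ell}f\ge|k-\ell|-\sqrt d/2-1/2$ (for $|k-\ell|$ above a dimensional threshold, the remaining pairs absorbed into $C$), and the bound follows with the correct exponent. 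For comparison, the paper does \emph{not} take the unit-scale route you anticipate: it keeps $\rho_R$ with $R=2|k-\ell|$. The reason this still works is that $\phi_{k,\ell}$ is affine, so $\rho_R*\phi_{k,\ell}$ differs from $\phi_{k,\ell}$ by a constant independent of $x$; this constant cancels in the difference $\sup_{\check Q_k}f-\inf_{\check Q_\ell}f$, and only an $O(\sqrt d)$ truncation error remains.
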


\begin{proof}
In the cases where $k = \ell$, the inequality holds with $C=1$.

Let $k,\ell \in \Z^d$, $k\ne\ell$, and let
$R :=  2 |k - \ell|$.
With $f :=  \alpha f_{R, k, \ell}$ and $E_f$ denoting multiplication
by the function $\ee^f$, we compute
\begin{align*}
    \norm{{\check\chi}_k (H_G - \la)^{-1} {{\check\chi}_\ell}}
 &= \norm{{\check\chi}_k E_f E_{-f} (H_G - \la)^{-1}
    E_fE_{-f} {{\check\chi}_\ell}} \cr
 & \le \norm{{\check\chi}_k \ee^f }_\infty \norm{E_{-f} (H_G - \la)^{-1}
    E_f} \norm{\ee^{-f} {{\check\chi}_\ell}}_\infty \cr
 & \le \frac{C_0}{s-r} \norm{\ee^{-f} {{\check\chi}_\ell}}_\infty,
\end{align*}
by Lemma \ref{A.1}. Since $0 < \alphad \le 1$ we have $ \norm{{\check\chi}_k
\ee^f}_\infty \le
\ee^{\sqrt{d}/2}$ and we may thus choose $C_0 := 9\ee^{\sqrt{d}/2}$.
Furthermore,
\begin{align*}
   \norm{\ee^{-f} {{\check\chi}_\ell}}_\infty
   \le \sup_{x \in {\check Q}_\ell} \ee^{-\alphad |x - k|}
   \le \ee^{-\alpha (|\ell - k|  - \sqrt{d}/2)} = C_1 \ee^{-\alphad |k -
\ell|},
\end{align*}
with $C_1 :=  \ee^{\alpha \sqrt{d}/2}$.
\end{proof}

\begin{proof}[Proof of Lemma \ref{3.3}]
In this proof we specify the number $s$, occurring in
Lemma~\ref{3.2} and in Lemma~\ref{A.2}, as
\[
s:=\frac{r+t}2.
\]
With this definition the quantity $\alpha$ occurring in
Lemma~\ref{A.2} becomes $\alpha$ from \eqref{(2.7)}, and $n_0$
from Lemma~\ref{3.2} becomes the lower bound for $n$ in
Proposition~\ref{2.2}.

Let $n\ge n_0$ (from Lemma~\ref{3.2}), and let ${\xi_n} \in C_c^\infty(\R^d)$
be as defined in
\eqref{(3.15)}.  As in Lemma~\ref{3.2} we consider $G = G_n := \Omega
\setminus \overline {F_n}$,
and we conclude from Lemma~\ref{3.2} that $\inf \sigma(H_G) \ge
s$, for
 the Dirichlet Laplacian $H_G$ of $G$.

Let ${\Phi}$ be a normalized eigenfunction of $H_\Omega$,
associated with an eigenvalue $\la_k \in [1,r]$. It is easy to see
that
 $(1 - \xi_n){\Phi}$ belongs to
$D(H_\Omega)\cap D(H_G)$.
The usual calculation yields
\begin{align}
\begin{split}
  (H_G - \la_k) ((1 - {\xi_n}){{{\Phi}}}) &= (H_\Omega -
\la_k)((1-{\xi_n}){{\Phi}})\\
&   = 2 \nabla{\xi_n}\cdot \nabla{{\Phi}} + (\Delta{\xi_n}) {{\Phi}}
   =: \eta.
     \label{(A.15)}
\end{split}
\end{align}
Here $\supp \eta \sse \supp \nabla \xi_n \sse \bigcup_{\ell \in Z_n}
{\check Q}_\ell$,
 with $Z_n$ from \eqref{(3.17)}, and
\begin{align*}
  \norm{\eta}_2 \le C (1 + \norm{\nabla{{\Phi}}}_2)
   =  C (1 + \sqrt{\la_k}),
\end{align*}
with a constant $C \ge 0$ depending only on $\norm{\nabla{\xi_n}}_\infty$ and
$\norm{\Delta{\xi_n}}_\infty$
(and therefore not depending on $n$ and $\Omega$),
and \eqref{(A.15)} yields
\begin{align}
  (1 - {\xi_n}){{\Phi}} = (H_G - \la_k)^{-1} \eta.
   \label{(A.17)}
\end{align}
 We now obtain
\begin{align*}
  {\check\chi}_j (1 - {\xi_n}) {{\Phi}} = {\check\chi}_j (H_G - \la)^{-1}
(\sum_{\ell \in Z_n}{{\check\chi}_\ell} \eta)
\end{align*}
for any $j \in \Z^d$,
so that, by Lemma~\ref{A.2} and \eqref{(A.17)},
\begin{align*}
    \norm{ {\check\chi}_j (1-{\xi_n}) \Phi}_2 & \le \sum_{\ell \in Z_n}
\norm{{\check\chi}_j (H_G -
  \la)^{-1}{{\check\chi}_\ell}} \norm{\eta}_2 \cr
   & \le \frac C{t -r} \sum_{\ell \in Z_n} \ee^{-\alpha |j - \ell|} (1 +
  \sqrt{\la_k}) \cr
  & \le C \frac{\sqrt r}{t - r} \sum_{\ell \in Z_n} \ee^{-\alpha |j -
\ell|}.
   \label{(A.19)}
\end{align*}
From H\"older's inequality we obtain $ \norm{ {\check\chi}_j (1-{\xi_n})
{{\Phi}}}_1 \le \norm{
{\check\chi}_j (1-{\xi_n}) {{\Phi}}}_2$, and this completes the proof.
\end{proof}

\section{A general result for discrete eigenvalues}
\label{gen-result}

Here we show that it is a very general property that eigenfunctions of the
Dirichlet Laplacian corresponding to isolated eigenvalues of finite multiplicity
are integrable.
This is more general than what is proved in the previous sections and applies
also to eigenvalues above the infimum of the essential spectrum, lying in
gaps of the essential spectrum. However, this general result does not give
information about $L_1$-bounds of the eigenfunctions, which is the most
important point in the previous sections (and in fact in the present paper).

It would be of interest to obtain $L_1$-estimates for
eigenfunctions associated with a discrete eigenvalue located in a
gap of $\sigmaess(H_\Omega)$ above the infimum of the essential
spectrum.

Let $H$ ($=-\Delta$) denote the Laplacian in $L_2(\R^d)$, and let
$\Omega\sse\R^d$ be open. Then
the $C_0$-semigroup generated by $-H_\Omega$ is dominated by the
$C_0$-semigroup generated by $-H$; see e.g. \cite{PS},
\cite[Theorem 2.1.6]{dav}, \cite[Theorem B.2]{st-vo-96}.
This implies that the $C_0$-semigroup
generated by $-H_\Omega$ is associated with an integral kernel satisfying a
Gaussian estimate.

\begin{theorem}
Let $\Omega\sse\R^d$ be an open set, and assume that $ \la$ is an
eigenvalue of $H_\Omega$ of finite algebraic multiplicity (or in
other words, $ \la$ is an isolated point of the spectrum of
$H_\Omega$ and an eigenvalue of $H_\Omega$ of finite
multiplicity). Then the eigenspace corresponding to $ \la$ is a
subspace of $L_p(\Omega)$, for all $p\in[1,\infty]$.
\end{theorem}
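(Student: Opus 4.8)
The plan is to work with the whole $L_p$-scale of realizations of the heat semigroup and to show that the spectral projection belonging to $\lambda$ is ``the same'' bounded operator on each $L_p(\Omega)$, $1\le p<\infty$; once that is known, membership of the eigenfunctions in every $L_p$ drops out. First I would record the analytic setup. By domain monotonicity the Dirichlet heat kernel satisfies $0\le p_\Omega(x,y;t)\le(4\pi t)^{-d/2}\ee^{-|x-y|^2/(4t)}$, which makes $\ee^{-tH_\Omega}$ act consistently as a contraction $C_0$-semigroup $T_p(\cdot)$ on each $L_p(\Omega)$, $1\le p<\infty$ (weak-$*$ continuous on $L_\infty$), with generator $-A_p$ and $A_2=H_\Omega$, and which yields ultracontractivity, $\norm{T_q(t)}_{p\to q}\le C_d\,t^{-\frac d2(\frac1p-\frac1q)}$ for $p\le q$; in particular $\norm{\ee^{-tH_\Omega}}_{2\to\infty}<\infty$ and $\norm{\ee^{-tH_\Omega}}_{1\to2}<\infty$. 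A first consequence is that the eigenspace $\mathcal E_\lambda:=\ker(H_\Omega-\lambda)$ (which, $H_\Omega$ being self-adjoint, is also the range of the Riesz projection and equals the full algebraic eigenspace) lies in $L_\infty(\Omega)$: any $u\in\mathcal E_\lambda$ equals $\ee^{\lambda t}\ee^{-tH_\Omega}u$, and $\ee^{-tH_\Omega}$ maps $L_2$ into $L_\infty$. Hence $\mathcal E_\lambda\subseteq L_2\cap L_\infty\subseteq\bigcap_{2\le p\le\infty}L_p(\Omega)$, and only the inclusion $\mathcal E_\lambda\subseteq L_1(\Omega)$ remains to be established.

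For the latter I would invoke the $L_p$-spectral theory of such semigroups, in the spirit of \cite{hem-voi, he-vo-87}: because $(T_p)$ is a consistent family and the kernel obeys a Gaussian bound (equivalently, $\ee^{-tH_\Omega}$ is ultracontractive and analytic on $L_p$ for $1<p<\infty$), the spectra $\sigma(A_p)$ coincide for $1\le p<\infty$, and for an isolated point $\lambda$ of $\sigma(A_2)$ of finite multiplicity the Riesz projections $P_p:=\frac1{2\pi\ii}\oint_\Gamma(z-A_p)^{-1}\di z$ — with $\Gamma$ a small circle about $\lambda$ meeting $\sigma(A_2)$ only at $\lambda$ — form a consistent family of bounded projections, i.e.\ $P_p$ and $P_q$ agree on $L_p\cap L_q$. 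The substance here is that the $L_2$-analytic continuation of the resolvent along a path in $\rho(A_2)$ joining $\Gamma$ to the region $\{\Re z<0\}$ (where $(z-A_p)^{-1}=-\int_0^\infty\ee^{zt}T_p(t)\di t$ is manifestly consistent) stays bounded on every $L_p$; the Gaussian/ultracontractive estimate is exactly what guarantees this, and this is the ``result from the theory of the Laplacian in $L_p$'' that the proof would quote or reprove.

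Granting this, the conclusion is bookkeeping. We have $P_2=E_{\{\lambda\}}(H_\Omega)$, a finite-rank orthogonal projection with range $\mathcal E_\lambda$. The operator $P_1$ is bounded on $L_1(\Omega)$ and agrees with $E_{\{\lambda\}}$ on $L_1\cap L_2$, so $E_{\{\lambda\}}(L_1\cap L_2)=P_1(L_1\cap L_2)\subseteq L_1(\Omega)$. On the other hand $E_{\{\lambda\}}(L_1\cap L_2)$ is a linear subspace of the finite-dimensional space $\mathcal E_\lambda$, and it is dense in it: if $w\in\mathcal E_\lambda$ were $L_2$-orthogonal to $E_{\{\lambda\}}(L_1\cap L_2)$, then $\scapro wg=\scapro{E_{\{\lambda\}}w}{g}=\scapro w{E_{\{\lambda\}}g}=0$ for all $g\in L_1\cap L_2$, hence $w=0$ since $L_1\cap L_2$ is dense in $L_2$. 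Therefore $E_{\{\lambda\}}(L_1\cap L_2)=\mathcal E_\lambda$, whence $\mathcal E_\lambda\subseteq L_1(\Omega)$; together with $\mathcal E_\lambda\subseteq L_\infty(\Omega)$ this gives $\mathcal E_\lambda\subseteq L_1(\Omega)\cap L_\infty(\Omega)\subseteq L_p(\Omega)$ for every $p\in[1,\infty]$.

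The main obstacle is the middle step — the $p$-independence of the spectrum near $\lambda$ and the consistency of the Riesz projections across the scale. This is where isolation of $\lambda$ is genuinely used: for $\lambda$ inside the essential spectrum the $L_1$-statement is false (cf.\ Example~\ref{2.14}(2)), so no soft argument relying only on ultracontractivity and $\mathcal E_\lambda\subseteq L_2\cap L_\infty$ can work, and one really has to transport the spectral information from $L_2$ to $L_1$ by exploiting the heat-kernel bound. Everything else — the setup, the $L_\infty$-bound, and the density argument pinning down $\mathcal E_\lambda=E_{\{\lambda\}}(L_1\cap L_2)$ — is routine.
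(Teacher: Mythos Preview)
Your proposal is correct and follows essentially the same route as the paper: the $p\ge 2$ case via ultracontractivity ($\ee^{-tH_\Omega}\colon L_2\to L_\infty$) is identical, and for $1\le p<2$ both arguments rest on the Gaussian bound to obtain $p$-independence of the spectrum and consistency of the resolvents/Riesz projections across the $L_p$-scale (the paper quotes \cite{are-94} and \cite{he-vo-87} for this, you invoke the same circle of results). Your closing density argument is just an explicit way of reading off what the paper states directly from \cite[Theorem~1.3]{he-vo-87}, namely that the range of the residuum is $p$-independent.
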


\begin{proof}
For $2\le p\le\infty$, the assertion follows from the facts that
the eigenspace corresponding to $ \la$ is invariant under the
$C_0$-semigroup generated by $H_\Omega$ and that the Gaussian
estimate of the semigroup kernel implies that $L_2(\Omega)$ is
mapped to $L_p(\Omega)$ for positive times ($p$-$q$-smoothing
property of the semigroup for $1\le p\le q\le\infty$). For $1\le
p<2$ we recall from \cite[Corollary 4.3 and Example
5.1(a)]{are-94} that the component $\rho_\infty(-H_{\Omega,p})$ of
the $L_p$-resolvent set of $-H_\Omega$ containing the right
half-plane (which for $p=2$ is equal to the resolvent set of
$H_\Omega$, because the spectrum is a subset of $(-\infty,0]$) is
independent of $1\le p<\infty$. Moreover, it is shown in
\cite{are-94} that the resolvents are consistent in
$\rho(H_{\Omega}$).

Now, the hypothesis states that $\la$ is a pole of the resolvent
of $H_\Omega$, with finite rank residuum (which is just the
corresponding
spectral projection). Then we conclude from \cite[Theorem
1.3]{he-vo-87} (see also \cite{aut-83}) that $ \la$ is an
eigenvalue of finite algebraic multiplicity of $H_{\Omega,p}$
(where $-H_{\Omega,p}$ denotes the generator of the
$L_p$-semigroup), for all $1\le p<\infty$, and that the range of
the residuum is independent of $p$. As the eigenspace
corresponding to the eigenvalue $\la$ is just the range of the
residuum we conclude that it is a subspace of $L_p(\Omega)$ for
all $p\in[1,\infty)$.
\end{proof}

\section{Heat content and heat trace} \label{content-trace}

 We let $(\ee^{-tH_\Omega} ; t \ge 0)$ denote the $C_0$-semigroup generated
by $H_\Omega$ in $L_2(\Omega)$. For $f$ continuous and bounded,
$(\ee^{-tH_\Omega}f ; t \ge 0)$ provides a (weak) solution of the
initial boundary value problem for the heat equation, given by
\begin{equation*}
   \frac{\partial u}{\partial t} = \Delta u\qquad  (x\in \Omega, \ t>0),
\end{equation*}
where $\lim_{t \downarrow 0} u(\cdot;t) = f$, locally uniformly,
and $u(.;t) = 0$ on $\partial\Omega$ for $t > 0$ in the usual weak
sense that $u(\cdot;t) \in \Hnought^1(\Omega)$. As is well-known
\cite{Gr}, there is a smooth function
\begin{equation*}
   \Omega \times \Omega \times (0,\infty) \ni (x,y;t) \mapsto p_{\Omega}(x,y;t),
\end{equation*}
called the {\it Dirichlet heat kernel for $\Omega$}, such that
\begin{equation*}
    (\ee^{-tH_\Omega} f)(x) = \int_\Omega p_\Omega(x,y;t) f(y)\di y
 \qquad (x \in \Omega, \ t > 0).
\end{equation*}
In particular
\begin{equation*}
     u(x;t) := \int_{\Omega}p_{\Omega}(x,y;t)\di y \qquad (x\in\Omega, \ t > 0)
\end{equation*}
solves the above initial boundary value problem
for the constant function $f = 1$. At regular boundary
points $x_0 \in \partial \Omega$ we have $u(x;t) \to 0$ as $\Omega \ni x \to x_0$, for
any $t > 0$. Physically, $u(x;t)$ represents the temperature at a
point $x$ at time $t$ if $\Omega$ initially has constant temperature $1$,
while the boundary is kept at temperature $0$ for all $t>0$.

 The \emph{heat content} of $\Omega$ at time $t>0$ is defined by
\begin{equation*}
       Q_{\Omega}(t) := \int_{\Omega} u(x;t)\di x =
    \int_{\Omega}\int_{\Omega}
                p_\Omega(x,y;t) \di y \di x\qquad(t>0).
\end{equation*}
This quantity has been studied extensively in the general setting
of open bounded sets with smooth boundaries in complete Riemannian
manifolds. See for example \cite{vdBG1,G}.

For $H_\Omega$ with compact resolvent,
 we let $(\Phi_k)_{k\in\N}$ denote an orthonormal basis of real
 eigenfunctions of $H_\Omega$, associated with the
 increasing sequence of eigenvalues $(\la_k)_{k\in\N}$. Then
\begin{equation}\label{e5.6}
    p_{\Omega}(x,y;t) = \sum_{k=1}^{\infty} \ee^{-t\lambda_k}\Phi_k(x)\Phi_k(y),
\end{equation}
in the sense that
\[
\ee^{-tH_\Omega}f=\sum_{k=1}^\infty\ee^{-t\la_k}
\int_\Omega\Phi_k(y)f(y)\di y \, \Phi_k
\]
for all $f\in L_2(\Omega)$, with convergence of the sum in $L_2(\Omega)$.
Assuming in addition that $\sum_{k=1}^{\infty} \ee^{-t\lambda_k}
\norm{\Phi_k}_1^2<\infty$, one obtains that the series \eqref{e5.6} also
converges absolutely in $L_1(\Omega\times\Omega)$, and thus
\begin{equation}\label{e5.7}
    Q_{\Omega}(t) =
\sum_{k=1}^{\infty} \ee^{-t\lambda_k}\left(\int_{\Omega}\Phi_k(x)\di x\right)^2
\le \sum_{k=1}^{\infty} \ee^{-t\lambda_k} \norm{\Phi_k}_1^2.
\end{equation}

The \emph{trace} of the heat semigroup, denoted by $Z_{\Omega}(t)$ and defined
by
\begin{equation*}
Z_{\Omega}(t) := \sum_{k=1}^{\infty} \ee^{-t\lambda_k}
             = \int_{\Omega}p_{\Omega}(x,x;t) \di x,
\end{equation*}
 has been studied in great detail too (\cite{G}).
 It is well-known that heat content
or heat trace may be finite for all $t>0$ even if the volume of
$\Omega$ is infinite. See for example \cite{vdBD} for an early
paper on this subject. The main result of this section reads as
follows.
\begin{theorem}\label{5.1} Let $\Omega$ be an open set in $\R^d$
such that $H_\Omega$ has compact resolvent.
Then $Z_{\Omega}(t)<\infty$ for all $t>0$ if and only if
$Q_{\Omega}(t)<\infty$ for all $t>0$. In either case we have that
both
\begin{equation}\label{e5.9}
Z_{\Omega}(t)\le (2\pi t)^{-d/2}Q_{\Omega}(t/2),
\end{equation}
and
\begin{equation}\label{e5.10}
Q_{\Omega}(t) \le \mC
\Bigl(\lambda_1^{-3d/2}t^{-d}Z_{\Omega}(t/6)^3
+\lambda_1^{(6-9d)/2}t^{3-4d}Z_\Omega(t/2)\Bigr),
\end{equation}
where $\mC$ is a constant depending upon $d$ only.
\end{theorem}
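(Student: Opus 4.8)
The proof splits naturally into the two inequalities \eqref{e5.9} and \eqref{e5.10}, together with the (easy) equivalence. For the lower bound \eqref{e5.9}, the plan is to use the semigroup property and positivity of the heat kernel. Writing $p_\Omega(x,x;t)=\int_\Omega p_\Omega(x,y;t/2)^2\di y$ by Chapman--Kolmogorov and self-adjointness, one has $Z_\Omega(t)=\int_\Omega\int_\Omega p_\Omega(x,y;t/2)^2\di y\di x$. The elementary bound $p_\Omega(x,y;t/2)\le p_{\R^d}(x,y;t/2)\le(2\pi t)^{-d/2}$ (domain monotonicity against the free heat kernel, exactly as invoked before Theorem \ref{2.12}) then gives $p_\Omega(x,y;t/2)^2\le(2\pi t)^{-d/2}p_\Omega(x,y;t/2)$, and integrating yields $Z_\Omega(t)\le(2\pi t)^{-d/2}Q_\Omega(t/2)$. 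This also shows $Q_\Omega(t)<\infty\ \forall t>0\Rightarrow Z_\Omega(t)<\infty\ \forall t>0$; the reverse implication is a consequence of the upper bound \eqref{e5.10}, so the equivalence is free once \eqref{e5.10} is established.

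The upper bound \eqref{e5.10} is the main work, and the whole point of the $L_1$-estimates developed in Section \ref{estimates} is to feed them into \eqref{e5.7}. Starting from $Q_\Omega(t)\le\sum_k\ee^{-t\la_k}\norm{\Phi_k}_1^2$, I would apply Corollary \ref{2.5} to each normalized eigenfunction $\Phi_k$ (here $\sigmaess(H_\Omega)=\emptyset$ since the resolvent is compact), with the choice $\theta:=1$, giving
\begin{align*}
\norm{\Phi_k}_1^2\le C\la_1^{-d/2}\Bigl(\bigl(\tfrac{\la_k}{\la_1}\bigr)^d(\log N_{2\la_k})^d N_{2\la_k}+\bigl(\tfrac{\la_k}{\la_1}\bigr)^{4d-3}\Bigr).
\end{align*}
Inserting this into the sum produces two pieces. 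The second piece is $C\la_1^{-d/2}\sum_k\ee^{-t\la_k}(\la_k/\la_1)^{4d-3}$; using $x^{4d-3}\ee^{-tx/2}\le C(d)t^{-(4d-3)}$ for $x>0$ one bounds this by $C\la_1^{(6-9d)/2}t^{3-4d}\sum_k\ee^{-t\la_k/2}=C\la_1^{(6-9d)/2}t^{3-4d}Z_\Omega(t/2)$, which is exactly the second term of \eqref{e5.10}.

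The first piece, $C\la_1^{-3d/2}\sum_k\ee^{-t\la_k}\la_k^d(\log N_{2\la_k})^d N_{2\la_k}$, is the delicate one and I expect it to be the main obstacle: one must absorb the factors $\la_k^d$, $(\log N_{2\la_k})^d$ and $N_{2\la_k}$ and convert the result into $Z_\Omega(t/6)^3$. The idea is that $N_{2\la_k}=\#\{j:\la_j\le2\la_k\}$, so $\la_k^d N_{2\la_k}$ (and, absorbing logs into a slightly larger power, also $\la_k^d(\log N_{2\la_k})^d N_{2\la_k}$, since $(\log N)^d N\le C_\epsilon N^{1+\epsilon}$) can be controlled by a triple sum $\sum_{\la_i,\la_j\le2\la_k}(\cdots)$; splitting $\ee^{-t\la_k}=\ee^{-t\la_k/3}\ee^{-t\la_k/3}\ee^{-t\la_k/3}$ and using $\la_i,\la_j\le2\la_k$ together with $\la_k\le3\la_k/... $ bounds each exponential factor against one of $\ee^{-t\la_i/6},\ee^{-t\la_j/6},\ee^{-t\la_k/6}$, while the polynomial factor $\la_k^d$ is dominated by $C(d)t^{-d}\ee^{t\la_k/6}$-type estimates; what remains factors as $\bigl(\sum_m\ee^{-t\la_m/6}\bigr)^3=Z_\Omega(t/6)^3$, up to the constant $\mC$. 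The bookkeeping — ensuring that the logarithmic factors, the polynomial weight $\la_k^d$, and the index constraints in $N_{2\la_k}$ all get distributed consistently among three copies of the trace at time $t/6$ — is the part requiring care; the monotonicity $N_{2\la_k}$ in $k$ and the crude bound $t^{-d}\ee^{ct\la}$ for polynomial weights are the tools that make it go through. Finally, the proof of the lower bound of Theorem \ref{2.12} promised in the introduction is inserted here, using the same free-heat-kernel comparison: $\la^{-d/2}$ lower bounds for $\norm{\Phi}_1/\norm{\Phi}_2$ follow from $\norm{\Phi}_\infty\le(\ee/2\pi d)^{d/4}\la^{d/4}\norm{\Phi}_2$ via $\norm{\Phi}_2^2\le\norm{\Phi}_1\norm{\Phi}_\infty$, with the explicit constant coming from optimizing the Gaussian bound $\ee^{-tH_\Omega}\le(4\pi t)^{-d/2}$ at $t=d/(2\la)$.
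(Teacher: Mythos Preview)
Your proposal is correct and follows essentially the same route as the paper. For \eqref{e5.9} you spell out the Chapman--Kolmogorov plus domain-monotonicity argument that the paper simply cites from \cite{vdBD}; for the second piece of \eqref{e5.10} your treatment is identical to the paper's.

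For the first piece you flag the bookkeeping as the delicate part; the paper dissolves this difficulty by isolating a one-line lemma (Lemma~\ref{5.2}): for any $T>0$,
\[
N_{2\la_k}=\sum_{j:\la_j\le 2\la_k}1\le\sum_j\ee^{T(2\la_k-\la_j)}=Z_\Omega(T)\,\ee^{2T\la_k}.
\]
Together with the crude bound $(\log x)^d\le d^d x$ (so $(\log N_{2\la_k})^d N_{2\la_k}\le d^d N_{2\la_k}^2$), this gives $(\log N_{2\la_k})^d N_{2\la_k}\le d^d Z_\Omega(T)^2\ee^{4T\la_k}$; inserting this into the sum, absorbing $\la_k^d$ via $\la_k^d\ee^{-t\la_k/6}\le C(d)t^{-d}$, and choosing $T=t/6$ leaves exactly $C\,t^{-d}Z_\Omega(t/6)^3$. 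This is precisely your triple-sum idea (the inequality above \emph{is} the estimate $\ee^{-t\la_k/3}\le\ee^{-t\la_j/6}$ for $\la_j\le 2\la_k$, summed over $j$), but packaged so that no index juggling is needed. Your passing mention of $N^{1+\epsilon}$ is a red herring: one wants $\epsilon=1$, i.e.\ $N^2$, to land on three copies of $Z_\Omega$.
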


In the proof of this result it will be shown that the hypothesis that
$Z_\Omega(t)<\infty$ for all $t>0$ implies that $\sum_{k=1}^{\infty}
\ee^{-t\lambda_k}
\norm{\Phi_k}_1^2<\infty$ for all $t>0$, and therefore the expression for
$Q_\Omega(t)$ stated in \eqref{e5.7} is valid.

We will need the following lemma where we use the above notation and
the assumptions of Theorem~\ref{5.1}.
\begin{lemma}\label{5.2} For any $T>0$, we have
\begin{equation}\label{e5.17}
N_{2\lambda_k}\le Z_{\Omega}(T)\ee^{2T\lambda_k}.
\end{equation}
\end{lemma}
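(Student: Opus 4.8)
The plan is to estimate the counting function $N_{2\lambda_k}$ by a simple Chebyshev-type (Markov) argument applied to the heat trace. The point is that every eigenvalue $\lambda_j \le 2\lambda_k$ contributes a term $\ee^{-T\lambda_j}$ to $Z_\Omega(T)$ which is not too small, namely at least $\ee^{-2T\lambda_k}$.

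First I would write, for fixed $T>0$ and using that the eigenvalues $\lambda_j$ are enumerated in increasing order,
\begin{align*}
  Z_\Omega(T) = \sum_{j=1}^\infty \ee^{-T\lambda_j}
     \ge \sum_{j\,:\,\lambda_j \le 2\lambda_k} \ee^{-T\lambda_j}
     \ge \sum_{j\,:\,\lambda_j \le 2\lambda_k} \ee^{-2T\lambda_k}
     = N_{2\lambda_k}\,\ee^{-2T\lambda_k},
\end{align*}
where in the last step I use that the number of indices $j$ with $\lambda_j \le 2\lambda_k$ is exactly $N_{2\lambda_k} = N_{2\lambda_k}(H_\Omega)$ (recall $H_\Omega$ has compact resolvent, so this is a finite number, and the sum above is a finite sum of equal terms). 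Rearranging gives $N_{2\lambda_k} \le Z_\Omega(T)\,\ee^{2T\lambda_k}$, which is exactly \eqref{e5.17}.

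There is essentially no obstacle here; the only things to be careful about are that $Z_\Omega(T)<\infty$ (which is part of the standing assumptions of Theorem~\ref{5.1}, and in any case the inequality is trivially true if $Z_\Omega(T)=\infty$), and that dropping all terms with $\lambda_j > 2\lambda_k$ only decreases the sum since every summand $\ee^{-T\lambda_j}$ is non-negative. The monotonicity $\ee^{-T\lambda_j} \ge \ee^{-2T\lambda_k}$ for $\lambda_j \le 2\lambda_k$ uses only $T>0$. So the proof is a two-line computation, and the lemma holds for every $T>0$ as stated.
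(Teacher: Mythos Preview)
Your proof is correct and uses the same Chebyshev-type idea as the paper. The paper's version is slightly less direct: it first shows $Z_\Omega(T)\ge k\,\ee^{-T\lambda_k}$, inverts this to $\lambda_j\ge T^{-1}\log\bigl(j/Z_\Omega(T)\bigr)$, and then counts the $j$ with $\lambda_j\le 2\lambda_k$; your one-line estimate $Z_\Omega(T)\ge\sum_{\lambda_j\le 2\lambda_k}\ee^{-T\lambda_j}\ge N_{2\lambda_k}\ee^{-2T\lambda_k}$ reaches the same conclusion without the detour.
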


\begin{proof} From
\begin{equation*}
Z_{\Omega}(T)\ge\sum_{j=1}^k \ee^{-T\lambda_j}\ge
\sum_{j=1}^k \ee^{-T\lambda_k} = k \ee^{-T\lambda_k},
\end{equation*}
we get
\begin{equation*}
\lambda_k \ge T^{-1}\log \frac{k}{Z_{\Omega}(T)},
\end{equation*}
and thus
\begin{equation*}
N_{2\lambda_k}= |\{j:\lambda_j\le2\lambda_k\}| \le \Bigl|\Bigl\{j
: T^{-1}\log\frac{j}{Z_{\Omega}(T)} \le2\lambda_k\Bigr\}\Bigr| \le
Z_{\Omega}(T) \ee^{2T\lambda_k}.
\end{equation*}
which concludes the proof of \eqref{e5.17}.
 \end{proof}

\begin{proof}[Proof of Theorem \ref{5.1}]
The proof of \eqref{e5.9} is an
immediate consequence of Lemma 2.6
in \cite{vdBD}.

The proof of \eqref{e5.10} relies
on the $L_1$-bounds for the eigenfunctions in Theorem~\ref{Theorem 0.1} or
Corollary~\ref{2.5} (with $\theta=1$) which gives the estimate
\begin{equation}\label{e5.11}
\norm{\Phi_k}_1^2\le
      C\lambda_1^{-3d/2}\lambda_k^{d}\left( (\log N_{2\lambda_k})^dN_{2\lambda_k}
         +\Bigl(\frac{\la_k}{\la_1}\Bigr)^{3(d-1)}\right).
\end{equation}
Hence by \eqref{e5.7} and \eqref{e5.11} we have that
\begin{equation}\label{e5.12}
 Q_{\Omega}(t)\le C\lambda_1^{-3d/2}\sum_{k=1}^{\infty}
 \ee^{-t\lambda_k}\lambda_k^{d}\left((\log
 N_{2\lambda_k})^d N_{2\lambda_k}+\Bigl(\frac{\la_k}{\la_1}\Bigr)^{3(d-1)}
\right).
\end{equation}
It is easily seen that $\log x \le d x^{1/d}$
 ($x\ge 1$) so that
\begin{equation}\label{e5.13}
Q_{\Omega}(t)\le C\lambda_1^{-3d/2}\sum_{k=1}^{\infty}
 \ee^{-t\lambda_k}\lambda_k^{d}\left(d^d
N^2_{2\lambda_k}+ \Bigl(\frac{\la_k}{\la_1}\Bigr)^{3(d-1)}\right).
\end{equation}
The following inequality is useful to bound the polynomial terms
in $\lambda_k$ in \eqref{e5.13}:
\begin{equation}\label{e5.14}
 \ee^{-tx}x^{\alpha}\le (\alpha/\ee)^{\alpha}t^{-\alpha}\qquad (x>0,\ t>0,\
\alpha>0).
\end{equation}
The application of this inequality with $x=\lambda_k/2$ and
$\alpha=4d-3$ gives that
\begin{equation*}\label{e5.15}
\sum_{k=1}^{\infty} \ee^{-t\lambda_k}\lambda_k^{4d-3}\le
((8d-6)/\ee)^{4d-3}t^{3-4d}Z_{\Omega}(t/2).
\end{equation*}
Hence the second term in \eqref{e5.13} is bounded by
\begin{equation}\label{e5.16}
((8d-6)/\ee)^{4d-3}C\lambda_1^{(6-9d)/2}t^{3-4d}Z_{\Omega}(t/2).
\end{equation}
By Lemma~\ref{5.2}, the first term in \eqref{e5.13} is bounded by
\begin{align}\label{e5.21}
\begin{split}
 d^dC\lambda_1^{-3d/2}& Z_\Omega(T)^2\sum_{k=1}^{\infty}
 \ee^{-t\lambda_k+4T\lambda_k}\lambda_k^{d}\\ 
& \le d^d
(6d/\ee)^dC\lambda_1^{-3d/2} Z_\Omega(T)^2t^{-d}\sum_{k=1}^{\infty}
 \ee^{-5t\lambda_k/6+4T\lambda_k},
\end{split}
\end{align}
where we have used \eqref{e5.14} with $x=\la_k/6$ and $\alpha=d$. We 
next choose
$T=t/6$ so that the right hand side in \eqref{e5.21} equals
\begin{equation}\label{e5.22}
(6d^2/\ee)^dC\lambda_1^{-3d/2} Z_{\Omega}(t/6)^3t^{-d}.
\end{equation}
Putting the two contributions under \eqref{e5.16} and
\eqref{e5.22} together one obtains the bound under \eqref{e5.10} with
\begin{equation*}
\mC= C\max\{(6d^2/\ee)^d,((8d-6)/\ee)^{4d-3}\}.
\qedhere
\end{equation*}
\end{proof}

We finally give a proof of Theorem \ref{2.12}.

\begin{proof}[Proof of Theorem \ref{2.12}]

Using the domain monotonicity of the Dirichlet heat kernel $0 \le
p_\Omega(x,y;t) \le p_{\R^d}(x,y;t)$ and the Schwarz inequality,
we first obtain
\begin{align*}
  \ee^{-t\la}\Phi(x) & = \ee^{-tH_\Omega}\Phi(x) = \int_\Omega p_\Omega(x,y;t)
  \Phi(y) \di y \\
  & \le  \int_\Omega p_\Omega(x,y;t) |\Phi(y)| \di y
   \le \left(\int_\Omega p^2_{\R^d}(x,y;t) \di y \right)^{1/2} \norm{\Phi}_2,
\end{align*}
 where $ \int_\Omega  p^2_{\R^d}(x,y;t) \di y =  (8 \pi t)^{-d/2} $
 since $ (2\pi t)^{-d/2} \int_{\R^d} \ee^{- |x-y|^2 /(2t)} \di y = 1$.
The choice of $t$ as $t := \frac{d}{4\lambda}$ then leads to the desired
estimate.
Furthermore,  $\norm{\Phi}_2^2 =  \int_\Omega |\Phi|^2 \di x
     \le \norm{\Phi}_\infty \norm{\Phi}_1$,
so that \eqref{(1)} implies \eqref{(2)}.
\end{proof}

{\frenchspacing

}
\end{document}